\definecolor{webcolor}{rgb}{0.8,0,0.2}
\definecolor{webbrown}{rgb}{.6,0,0}
\numberwithin{equation}{section}
\newcommand{\FF}{\mathbb F}
\newcommand{\QQ}{\mathbb Q}
\newcommand{\ZZ}{\mathbb Z}
\newcommand{\OO}{\mathcal O}
\newcommand{\calI}{\mathcal I}
\newcommand{\calP}{\mathcal P}
\newcommand{\calE}{\mathcal E}
\newcommand{\p}{\mathfrak p}
\newcommand{\m}{\mathfrak m}
\newcommand{\n}{\mathfrak n}
\def\rank{{\operatorname{rank}}}
 \def\Cl{\operatorname{Cl}}
\def\Sel{\operatorname{Sel}} 
\def\Gal{\operatorname{Gal}}
\def\Frob{\operatorname{Frob}}
\newcommand{\q}{\mathfrak q}
\newcommand{\legendre}[2]{\genfrac{(}{)}{}{}{#1}{#2}}
\newcommand{\defi}[1]{\textsf{#1}} % for defined terms
\newcommand\blank[1]{}
\def\bbar#1{\setbox0=\hbox{$#1$}\dimen0=.2\ht0 \kern\dimen0 
\overline{\kern-\dimen0 #1}}
\newcommand{\Kbar}{{\bbar{K}}}
\newtheorem{thm}{Theorem}[section]
\newtheorem{lemma}[thm]{Lemma}
\theoremstyle{definition}
\theoremstyle{remark}
\newenvironment{romanenum}{\hfill \begin{enumerate} }{\end{enumerate}}
\newenvironment{alphenum}{\hfill \begin{enumerate} }{\end{enumerate}}
\begin{document}

\title{Rank one elliptic curves and rank stability}
\subjclass[2020]{Primary 11G18; Secondary 14J27}

% 11G05 Elliptic curves over global fields
% 14J27 Elliptic surfaces, elliptic or Calabi-Yau fibrations
% \keywords{}
\author{David Zywina}
\address{Department of Mathematics, Cornell University, Ithaca, NY 14853, USA}
\email{zywina@math.cornell.edu}

\begin{abstract}
For any quadratic extension $L/K$ of number fields, we prove that there are infinitely many elliptic curves $E$ over $K$ so that the abelian groups $E(K)$ and $E(L)$ both have rank $1$.   In particular, there are infinitely many elliptic curves of rank $1$ over any number field.  This result generalizes theorems of Koymans-Pagano and Alp\"oge–Bhargava–Ho–Shnidman which were used to independently show that Hilbert's  tenth problem over the ring of integers of any number field has a negative answer.  Our approach differs since we are obtaining our elliptic curves by specializing a nonisotrivial rank $1$ family of elliptic curves and we compute all the ranks involved.
\end{abstract}

\maketitle

\section{Introduction}

Let $K$ be any number field.  For an elliptic curve $E$ defined over $K$, the abelian group $E(K)$ consisting of the $K$-rational points of $E$ is finitely generated.   The \defi{rank} of $E$ is the rank of the abelian group $E(K)$.   

A \emph{minimalist conjecture} loosely predicts that if we order elliptic curves over $K$ by their height, the density of those curves with rank $0$ and the density of those curves with rank $1$ should both be $1/2$ (the idea being that a typical curve over $K$ should have the smallest possibly rank that is compatible with the parity conjecture).   In their work on Hilbert's tenth problem, Mazur and Rubin proved that there are elliptic curves over $K$ of rank $0$, cf.~\cite[Theorem 1.11]{MR2660452}.    Our first result guarantees the existence of elliptic curves over $K$ of rank $1$.

\begin{thm} \label{T:cor}
For any number field $K$, there are infinitely many elliptic curves $E$ over $K$, up to isomorphism, of rank $1$.
\end{thm}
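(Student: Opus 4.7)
The plan is to construct a non-isotrivial elliptic surface over $K$ with generic rank $1$ and then apply a specialization argument, while extracting infinitely many specializations whose rank is exactly $1$. Concretely, I would exhibit a smooth projective curve $C$ over $K$ with $C(K)$ infinite --- typically $C = \PP^1_K$ --- together with an elliptic surface $\pi \colon \calE \to C$ so that (i) the induced $j$-map $C \to \PP^1$ is non-constant, ensuring the family is non-isotrivial, and (ii) the Mordell--Weil group $\calE_\eta(K(C))$ of the generic fibre has rank exactly $1$. A suitable explicit Weierstrass model with a controlled rational section would make both properties verifiable by a direct computation together with a height-pairing argument.

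Producing infinitely many $t \in C(K)$ with $\rank \calE_t(K) \geq 1$ is then immediate from Silverman's specialization theorem: outside a thin subset of $C(K)$, the specialization homomorphism $\calE_\eta(K(C)) \to \calE_t(K)$ is injective modulo torsion, so the generator of $\calE_\eta(K(C))$ specialises to a point of infinite order. Since $C(K)$ is not thin, this leaves infinitely many good $t$, and the non-constancy of $j$ ensures that infinitely many of the $\calE_t$ are pairwise non-isomorphic.

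The main obstacle is producing infinitely many $t$ for which $\rank \calE_t(K) = 1$ \emph{exactly}, since specialization only gives the lower bound and ranks can jump. I would attack this by a descent in the family: show that for infinitely many $t \in C(K)$, the Selmer group $\Sel_p(\calE_t/K)$ has $\FF_p$-dimension equal to $1 + \dim_{\FF_p} \calE_t[p](K)$ for some small prime $p$ (most likely $p=2$). Combined with the lower bound, this pins $\rank \calE_t(K)$ to $1$ and simultaneously kills the $p$-part of the Tate--Shafarevich group. In practice this would combine a global root number calculation that pins down the parity of the rank with an analysis of the local Selmer conditions as $t$ varies, plus a density or equidistribution argument in the spirit of Mazur--Rubin or Bhargava guaranteeing an infinite supply of good $t$. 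A well-chosen family --- one with tame reduction profile and a visible $K$-rational $p$-isogeny --- would make this descent tractable.

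Putting the pieces together, the set of $t \in C(K)$ with $\rank \calE_t(K) = 1$ is infinite, and because the $j$-map is non-constant these $\calE_t$ have infinitely many distinct $j$-invariants, yielding infinitely many pairwise non-isomorphic elliptic curves over $K$ of rank $1$, as required by Theorem~\ref{T:cor}.
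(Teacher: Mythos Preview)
Your outline tracks the paper's strategy closely: the paper also chooses an explicit non-isotrivial rank-$1$ family over $\PP^1_K$ (the curve $y^2=x^3+4T(T+1)x^2+2T(T+1)^2(T-1)x$, which has a rational $2$-torsion point and hence a $2$-isogeny), gets the lower bound from Silverman's specialization theorem, and gets the upper bound by an explicit $2$-isogeny descent at well-chosen specializations. Non-constancy of the $j$-map is used at the end exactly as you describe to get infinitely many isomorphism classes.

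The genuine gap in your proposal is the sentence ``plus a density or equidistribution argument in the spirit of Mazur--Rubin or Bhargava guaranteeing an infinite supply of good $t$.'' This is not a detail to be filled in later; it is the entire content of the theorem over a general number field, and invoking Mazur--Rubin or Bhargava-style results will not close it. Mazur--Rubin's unconditional results over general $K$ produce rank~$0$ twists, not rank~$1$; their rank-$1$ statements need finiteness of $\Sha[2]$. Bhargava-type counting is specific to $\QQ$ and does not port to arbitrary $K$. The paper itself flags that the known rank-$1$ constructions over $\QQ$ ``do not readily extend to a general number field.'' What the paper actually does is reverse-engineer the $2$-descent: it writes the specialization at $t=a/b$ so that the bad primes of $\calE_t$ are governed by $a$, $a+b$, $a-b$, imposes carefully chosen local conditions $(a,b)\in U_\p$ at a finite set $S$ of primes, and then needs $a$, $a+b$, $a-b$ to each generate a \emph{prime} ideal of $\OO_{K,S}$. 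The existence of such $(a,b)$ is supplied by Kai's generalization of Green--Tao--Ziegler to number fields; this analytic input is the missing idea your sketch does not name.

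A smaller point: the root-number calculation you propose is not used and would not help. Parity alone cannot certify rank~$1$ without the parity conjecture, which is not available unconditionally here. The paper instead computes both $\Sel_{\phi}$ and $\Sel_{\hat\phi}$ exactly (they have orders $2$ and $4$) and matches them against explicit points, so the rank is pinned to $1$ with no parity assumption.
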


Theorem~\ref{T:cor} was already known for $K=\QQ$, for example see \cite[Th\'eor\`eme~3.1]{MR870738} or \cite{MR3237733}, but these methods do not readily extend to a general number field.   One also knew that Theorem~\ref{T:cor} held under various conjectures;  for example \cite[Corollary 1.12]{MR2660452} implies the theorem assuming the finiteness of the $2$-part of the Tate--Shafarevich group for elliptic curves over number fields.  Our proof builds off of the ideas introduced  in \cite{Zyw25a} and \cite{Zyw25b}; in the second paper we proved that there are infinitely many elliptic curves over $\QQ$ of rank $2$.   An independent proof of Theorem~\ref{T:cor} has been given by Koymans and Pagano \cite{KPnew} which extends their earlier techniques from \cite{KP};  they show that a ``generic'' elliptic curve over $K$ with full $2$-torsion has infinitely many quadratic twists of rank $1$.

Our main result is the stronger rank stability theorem.

\begin{thm} \label{T:main}
For any quadratic extension $L/K$ of number fields, there are infinitely many elliptic curves $E$ over $K$, up to isomorphism, for which $ \rank \,E(K)= \rank \,E(L)=1$.
\end{thm}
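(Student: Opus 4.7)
Let $\chi \colon \Gal(\Qbar/K) \to \{\pm 1\}$ be the quadratic character cutting out $L$, and for an elliptic curve $E/K$ let $E^\chi$ denote the $\chi$-twist. The standard decomposition of $E(L)\otimes\QQ$ under $\Gal(L/K)$ yields the identity
$$\rank E(L) = \rank E(K) + \rank E^\chi(K),$$
so Theorem~\ref{T:main} is equivalent to producing infinitely many pairwise non-isomorphic $E/K$ satisfying $\rank E(K) = 1$ and $\rank E^\chi(K) = 0$ simultaneously. My plan is to adapt the family-based strategy of \cite{Zyw25a, Zyw25b} to this twin-rank setting.

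The first step is to construct a nonisotrivial one-parameter family $\pi\colon \calE \to U$ over $K$, with $U \subseteq \PP^1_K$ open, whose generic fiber has Mordell--Weil rank exactly $1$ over $K(t)$ and whose $\chi$-twisted family $\calE^\chi$ has generic fiber of rank $0$ over $K(t)$. Equivalently, $\calE_L$ has generic rank $1$ over $L(t)$. Such a family can plausibly be extracted from the rank-$2$ construction of \cite{Zyw25b} by a quadratic base change keyed to $\chi$: one arranges that one of the two independent generic sections descends to $K$, while the other lives only on the $\chi$-twist. Having built this family, Silverman's specialization theorem immediately yields $\rank E_t(K)\geq 1$ and $\rank E_t(L)\geq 1$ for all $t\in U(K)$ outside a thin set.

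The core of the proof is then the matching upper bound: producing infinitely many $t\in U(K)$ with $\rank E_t(K)\leq 1$ and $\rank E_t^\chi(K)=0$. I would carry this out by a $2$-descent argument running uniformly in the family, controlling both $\Sel_2(E_t/K)$ and $\Sel_2(E_t^\chi/K)$ in terms of the factorization behaviour of the specialization parameter $t$ at the bad primes of $\calE$. A sieve on $t$ then selects those specializations where local conditions pin the Selmer groups to their minimal possible sizes, and an odd-root-number check for $E_t$ together with an even-root-number check for $E_t^\chi$ eliminates the residual parity ambiguity, forcing ranks exactly $1$ and $0$. Infinitely many distinct $j$-invariants arise because the family is nonisotrivial, so infinitely many of the specializations are non-isomorphic.

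The main obstacle will be this simultaneous Selmer upper bound. Standard descent controls the Selmer group rather than the Mordell--Weil rank, so the family must be engineered so that the $2$-part of the Tate--Shafarevich group vanishes for a sufficiently large set of $t$ both for $E_t$ and for $E_t^\chi$. The twist compounds the difficulty, since local Selmer conditions for $E_t^\chi$ depend on the interaction between the ramification of $\chi$ and the bad reduction of $E_t$, and this must be controlled uniformly as $t$ varies. Overcoming this obstruction is where the structure of the family in the first step --- particularly any rational $2$-torsion or $2$-isogeny it carries, and the compatibility of that structure with $\chi$ --- will be decisive.
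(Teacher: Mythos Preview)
Your strategy is broadly the one the paper follows: the rank decomposition $\rank E(L)=\rank E(K)+\rank E^\chi(K)$, a nonisotrivial one-parameter family carrying rational $2$-structure, Silverman specialization for the lower bound, an explicit descent for the upper bound, and nonisotriviality for infinitude. Three points of divergence are worth flagging. First, the paper does not extract a family from \cite{Zyw25b}; it writes down the explicit family $y^2=x^3+4T(T+1)x^2+2T(T+1)^2(T-1)x$ with a rational $2$-torsion point and a visible section $P=(-2T(T+1),2T(T+1)^2)$, and works via $2$-\emph{isogeny} descent (Selmer groups for $\phi$ and $\hat\phi$) rather than full $\Sel_2$. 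Second, the paper never invokes root numbers or parity: the local conditions are engineered so that $\Sel_{\phi_d}$ and $\Sel_{\hat\phi_d}$ are computed \emph{exactly}, and explicit points (namely $(0,0)$ and $P$) are exhibited which already generate the relevant quotients, so there is no residual ambiguity to kill. Your parity step is not wrong, but it is an extra hypothesis you do not need once the descent is sharp.

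The genuine gap in your plan is the phrase ``a sieve on $t$ then selects those specializations.'' This is exactly where the work lies, and the paper's answer is not a sieve in the usual sense but an appeal to Kai's extension of Green--Tao--Ziegler to number fields \cite{Kai}: one parametrizes $t=a/b$ and demands that $a$, $a+b$, $a-b$ generate distinct prime ideals of $\OO_{K,S}$ while $(a,b)$ lies in prescribed open sets $U_\p\subseteq K_\p^2$ for $\p\in S$ and in a prescribed archimedean region. The primality of these three linear forms is what lets one enumerate the bad primes of the specialization and carry out the Tamagawa-number and Selmer computations by hand (Lemma~\ref{L:Selmer}); the local sets $U_\p$ are reverse-engineered from the desired outcome of those computations, including a reciprocity step (Lemma~\ref{L:pi1 not square mod q3}) and a class-field-theoretic constraint forcing the prime $(a+b)$ to be inert in $L$ (Lemma~\ref{L:q2 is inert}), which is precisely what distinguishes $\Sel_{\phi_1}$ from $\Sel_{\phi_D}$ and produces the rank discrepancy $1$ versus $0$. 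Without naming this analytic input and the primality constraint on the three linear forms, your plan does not yet explain why suitable $t$ exist.
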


The interest in Theorem~\ref{T:main} is that it generalizes the following two recent results which both imply that Hilbert's tenth problem for the ring of integers of a number field has a negative answer.   

\begin{thm}[Koymans--Pagano \cite{KP}*{Theorem~2.4}] \label{T:KP}
Let $K$ be a number field with at least $32$ real places and define $L:=K(i)$.   Then there are infinitely many elliptic curves $E$ over $K$ such that $\rank\, E(K) = \rank\, E(L)>0$.
\end{thm}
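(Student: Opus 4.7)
The plan is to reduce the theorem to a simultaneous $2$-Selmer control problem for quadratic twists. Since $L = K(i)$, the nontrivial character of $\Gal(L/K)$ is the quadratic character cut out by $-1 \in K^\times/(K^\times)^2$, so for any elliptic curve $E/K$ and any squarefree $d \in \OO_K$,
\[
\rank E^d(L) \;=\; \rank E^d(K) + \rank E^{-d}(K).
\]
Thus the theorem follows once one fixes an auxiliary elliptic curve $E_0/K$ and produces infinitely many squarefree $d$ for which $\rank E_0^d(K) \geq 1$ and $\rank E_0^{-d}(K) = 0$, since then $\rank E_0^d(L) = \rank E_0^d(K) > 0$ and the two ranks agree.

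To produce such $d$, I would fix $E_0/K$ with convenient $2$-torsion, say $E_0[2](K) \cong (\ZZ/2\ZZ)^2$, so that $\Sel_2(E_0^d/K)$ admits the familiar description as a subspace of $\bigoplus_v H^1(K_v, E_0^d[2])$ cut out by local conditions depending on $d$.  I would then apply the Mazur--Rubin / Klagsbrun--Mazur--Rubin twisting machinery: starting from a base twist, multiply $d$ by an auxiliary prime $\p$ chosen via Chebotarev so that the pair $\bigl(\dim_{\FF_2}\Sel_2(E_0^d/K),\,\dim_{\FF_2}\Sel_2(E_0^{-d}/K)\bigr)$ evolves in a prescribed way. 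The target is $\dim \Sel_2(E_0^d/K) \leq 1$ and $\Sel_2(E_0^{-d}/K) = 0$; a parity computation via root numbers then forces $\dim \Sel_2(E_0^d/K)$ to be odd, hence exactly $1$. This yields $\rank E_0^d(K) = 1$ with trivial $2$-primary Tate--Shafarevich, while $\Sel_2(E_0^{-d}/K) = 0$ gives $\rank E_0^{-d}(K) = 0$ directly.

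The main obstacle is that $E_0^d$ and $E_0^{-d}$ are coupled: their nonarchimedean local $2$-Selmer conditions depend on the same data, so a naive twisting step affects both Selmer groups at once. The only places where one has an independent handle are the archimedean ones, since at a real place $v$ the local condition flips when one replaces $d$ by $-d$ at $v$. The hypothesis of at least $32$ real places creates enough archimedean degrees of freedom to decouple the two Selmer groups and arrange both into the target configuration, subject to the global Poitou--Tate parity constraint. Establishing this joint twisting step is the heart of the proof; it requires a Chebotarev density calculation in a large extension of $K$ containing $E_0[2]$ and $K(i)$, careful bookkeeping of how local invariants change under multiplication by auxiliary primes, and the archimedean flexibility provided by the hypothesis on real places.
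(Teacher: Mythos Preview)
This theorem is quoted in the paper as a result of Koymans--Pagano; the paper does not reprove it directly but instead proves the strictly stronger Theorem~\ref{T:main} (any quadratic extension $L/K$, no hypothesis on real places, and rank exactly $1$), from which Theorem~\ref{T:KP} is an immediate corollary.  The paper's route to Theorem~\ref{T:main} is entirely different from your sketch: rather than twisting a fixed curve, it takes the nonisotrivial rank--$1$ family
\[
y^2=x^3+4a(a+b)x^2+2a(a+b)^2(a-b)x
\]
and specializes at pairs $(a,b)$ for which $a$, $a+b$, $a-b$ generate distinct prime ideals of $\OO_{K,S}$ (such pairs exist by Kai's number-field extension of Green--Tao--Ziegler).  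An explicit $2$-isogeny descent, with local conditions rigged in advance by the choice of $S$ and the open sets $U_\p$, then computes \emph{both} relevant Selmer groups exactly and gives $\rank E_1(K)=1$, $\rank E_D(K)=0$.  No archimedean freedom is used at all; the real places are pinned down by the single condition $0<a<b$.

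Your proposal, by contrast, is a strategy outline in which the decisive step is explicitly deferred.  Two concrete issues:
\begin{itemize}
\item With $E_0[2](K)\cong(\ZZ/2\ZZ)^2$ one always has $\dim_{\FF_2}\Sel_2(E_0^d/K)\ge 2$, so the stated targets ``$\dim\Sel_2(E_0^d/K)\le 1$'' and ``$\Sel_2(E_0^{-d}/K)=0$'' are impossible as written; you mean the reduced Selmer ranks (i.e.\ subtract $\dim E_0[2](K)=2$).  This is easily fixed but should be said correctly.
\item The substantive gap is the ``joint twisting step''.  The Mazur--Rubin / Klagsbrun--Mazur--Rubin machinery lets one move a \emph{single} Selmer rank by a Chebotarev-chosen prime, but here the local conditions for $E_0^d$ and $E_0^{-d}$ at every finite place are determined by the same data, so a twisting prime moves both ranks in a correlated way.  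You assert that $32$ real places provide enough independent handles to decouple them, but you do not carry out the linear algebra over $\FF_2$ that would show the archimedean contributions actually span what is needed, nor do you verify that the global Poitou--Tate parity constraint is compatible with the target configuration.  This is exactly the content of the theorem, and it is not supplied.  (Indeed, Koymans--Pagano themselves do not rely on Mazur--Rubin twisting alone: their argument, like this paper's, ultimately rests on an additive-combinatorics input to locate primes with prescribed splitting.)
\end{itemize}
So the reduction $\rank E^d(L)=\rank E^d(K)+\rank E^{-d}(K)$ is correct and is also how the paper passes from Theorem~\ref{T:true} to Theorem~\ref{T:main}, but beyond that your proposal is a plan rather than a proof, and the plan diverges sharply from what the paper actually does.
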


\begin{thm}[Alp\"oge–Bhargava–Ho–Shnidman \cite{ABHS}*{Theorem~1.1}] \label{T:ABHS}
For any quadratic extension $L/K$ of number fields, there are infinitely many abelian varieties $A$ over $K$ such that $\rank\, A(K) = \rank\, A(L)>0$.
\end{thm}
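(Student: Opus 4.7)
Write $L = K(\sqrt{d})$ for some non-square $d \in K^\times$. Since $\Res_{L/K} E$ decomposes up to $K$-isogeny as $E \times E^{d}$, where $E^{d}$ denotes the quadratic twist of $E$ by $d$, one has
\[
\rank E(L) \;=\; \rank E(K) + \rank E^{d}(K),
\]
so the theorem is equivalent to producing infinitely many pairwise non-isomorphic elliptic curves $E/K$ with $\rank E(K) = 1$ and $\rank E^{d}(K) = 0$.  The strategy is to realise these curves as $K$-specializations of a single non-isotrivial one-parameter family $\pi\colon \mathcal{E} \to U$ over $K$, with $U \subseteq \AA^{1}_{K}$ a dense open, engineered so that the generic fiber $\mathcal{E}_{\eta}$ has rank exactly $1$ over $K(U)$ and the $d$-twisted family $\mathcal{E}^{d} \to U$ has generic rank $0$ over $K(U)$.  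Such a family can be built from any non-isotrivial rank-$1$ family by replacing it with $\mathcal{E}^{u}$ for a suitable $u \in K(U)^\times$ chosen so that $\mathcal{E}^{u}$ still has a non-torsion section over $K(U)$ but $\mathcal{E}^{ud}$ does not.  Applying Silverman's specialization theorem to $\mathcal{E}/K(U)$ gives $\rank \mathcal{E}_{t}(K) \ge 1$ for all but finitely many $t \in U(K)$, and non-isotriviality guarantees that these specializations exhaust infinitely many $j$-invariants, hence infinitely many $K$-isomorphism classes.

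The main content of the proof is then the pair of matching upper bounds $\rank \mathcal{E}_{t}(K) \le 1$ and $\rank \mathcal{E}_{t}^{d}(K) \le 0$ on an infinite set of $t$.  I would obtain these by a simultaneous $2$-descent on $\mathcal{E}_{t}$ and $\mathcal{E}_{t}^{d}$, extending the Selmer-group methods developed in \cite{Zyw25a, Zyw25b}.  The concrete target is to sieve $U(K)$ down to an infinite subset of $t$ for which
\[
\dim_{\FF_{2}} \Sel_{2}(\mathcal{E}_{t}/K) \le 1 + \dim_{\FF_{2}} \mathcal{E}_{t}(K)[2], \qquad \dim_{\FF_{2}} \Sel_{2}(\mathcal{E}_{t}^{d}/K) \le \dim_{\FF_{2}} \mathcal{E}_{t}^{d}(K)[2],
\]
which, combined with the Mordell--Weil/Selmer inequality and the lower bound $\rank \mathcal{E}_{t}(K) \ge 1$ from specialization, forces $\rank \mathcal{E}_{t}(K) = 1$ and $\rank \mathcal{E}_{t}^{d}(K) = 0$.

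The main obstacle is to enforce both Selmer bounds simultaneously on a common set of specializations.  Since the local Selmer conditions for $\mathcal{E}_{t}^{d}$ differ from those for $\mathcal{E}_{t}$ at each place where $d$ is not locally a square, the sieve has to balance two families of local conditions at once.  This requires a careful analysis of how twisting by the fixed element $d$ transforms the local Selmer conditions at the primes above $2$ and at the bad primes of the family, together with a Chebotarev-type equidistribution input for the mod-$2$ Galois representations of $\mathcal{E}_{t}$ that is strong enough to control the twisted and untwisted local conditions in a coupled way.  Once such joint Selmer bounds are established on an infinite set of $t$, the non-isotriviality of $\pi$ delivers infinitely many pairwise non-isomorphic elliptic curves $E/K$ with $\rank E(K) = \rank E(L) = 1$.
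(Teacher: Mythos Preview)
This theorem is quoted from \cite{ABHS} and is not given an independent proof in the present paper; instead the paper proves the strictly stronger Theorem~\ref{T:main}, of which Theorem~\ref{T:ABHS} is an immediate corollary. Your proposal shares the broad architecture of that proof---reduce via the twist decomposition to finding infinitely many $E/K$ with $\rank E(K)=1$ and $\rank E^{d}(K)=0$, specialize a non-isotrivial rank-$1$ family, and control Selmer groups by descent---so the outline is on target.

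But two essential ingredients are missing, and what you have written is a strategy rather than a proof. First, the sentence ``Such a family can be built from any non-isotrivial rank-$1$ family by replacing it with $\mathcal{E}^{u}$ for a suitable $u$\ldots'' is unsubstantiated: there is no general mechanism for choosing $u\in K(U)^\times$ so that $\mathcal{E}^{u}$ keeps a non-torsion section while $\mathcal{E}^{ud}$ has generic rank zero, and the paper does not attempt anything of the sort. It fixes one explicit curve $y^{2}=x^{3}+4T(T+1)x^{2}+2T(T+1)^{2}(T-1)x$ over $\QQ(T)$ and never appeals to the generic rank of the $d$-twist; the rank-$0$ statement for $E_{D}$ is established fiber by fiber via $2$-isogeny descent, not deduced from a generic-fiber calculation. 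Second, and more seriously, you do not explain how to produce infinitely many specializations on which both Selmer bounds hold simultaneously. ``Chebotarev-type equidistribution for the mod-$2$ Galois representations'' is not what does this job. The paper reverse-engineers explicit local conditions $U_{\p}\subset K_{\p}^{2}$ at a finite set $S$ so that any $(a,b)$ meeting them forces $|\Sel_{\phi_{d}}(E_{d}/K)|=2$ for both $d\in\{1,D\}$, and then invokes Kai's number-field extension of Green--Tao--Ziegler to find $a,b\in\OO_{K,S}$ with $a$, $a+b$, $a-b$ simultaneously prime and $(a,b)\in U_{\p}$ for all $\p\in S$. Without an analytic input of this strength (or the alternative input in \cite{ABHS}) there is no argument that the sieve you describe has any survivors, let alone infinitely many.
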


The theorems from \cite{KP} and \cite{ABHS} are both stated in terms of the existence of such an abelian variety but it is clear from their proofs that they can produce infinitely many.   Note that the existence is all that is needed for their application to Hilbert's tenth problem.

\subsection{Our curves}

Our approach to proving Theorem~\ref{T:cor} is to consider the elliptic curve $\calE$ over the function field $\QQ(T)$ defined by the Weierstrass equation
\[
y^2=x^3+4T(T+1)x^2 +2T(T+1)^2(T-1) x.
\]
The discriminant of this Weierstrass model is $2^9 T^3 (T+1)^7 (T-1)^2$.   The group $\calE(\QQ(T))$ contains a $2$-torsion point $(0,0)$ and a point $P:=(-2T(T+1),2T(T+1)^2)$ of infinite order.  

Consider any number field $K$. For each $t\in K-\{0,1,-1\}$, the specialization of $\calE$ at $t$ is the elliptic curve $\calE_t$ over $K$ defined by the equation $y^2=x^3+4t(t+1)x^2 +2t(t+1)^2(t-1) x$ with a point $P_t:=(-2t(t+1),2t(t+1)^2) \in \calE_t(K)$.  From Silverman \cite{MR703488} we know that the point $P_t$ will have infinite order for all but finitely many choice of $t$.   For such $t$, we have $\rank\, \calE_t(K) \geq 1$.  The challenge is to find examples for which equality holds.

Our strategy is to try and choose $t\in K-\{0,1,-1\}$ such that an explicit $2$-descent will produce the desired upper bound $\rank \, \calE_t(K) \leq 1$.  To perform a $2$-descent we will need to fully understand the bad primes of $\calE_t$.  Taking $t=a/b$ with $a,b\in K$ such that $ab(a+b)(a-b) \neq 0$ and scaling the coordinates of the Weierstrass model appropriately, we find that $\calE_t/K$ can be given by the Weierstrass equation
\begin{align}\label{E:intro strategy weierstrass}
y^2=x^3+4a(a+b)x^2 +2a(a+b)^2(a-b) x
\end{align}
which has discriminant $\Delta=2^9 a^3 (a+b)^7 (a-b)^2$.  

\subsection{Technical result}

For a finite set $S$ of nonzero prime ideals of $\OO_K$, we will denote by $\OO_{K,S}$ the ring of $S$-integers in $K$.   The main task of this paper is to prove the following.

\begin{thm} \label{T:true}
Let $K$ be a number field and fix a $D\in K$ that is not a square.   There is a finite set $S$ of nonzero prime ideals of $\OO_K$ and a nonempty open subset $U_\p$ of $K_\p^2$ for each $\p\in S$ such that if $a$ and $b$ are elements of $\OO_{K,S}$ that satisfy the following conditions:
\begin{itemize}
\item
$a$, $a+b$ and $a-b$ generate distinct nonzero prime ideals of $\OO_{K,S}$,
\item
$(a,b)$ lies in $U_\p$ for all $\p\in S$,
\item
$0<a<b$ in $K_v$ for all real places $v$ of $K$,
\end{itemize}
then the elliptic curve over $K$ defined by (\ref{E:intro strategy weierstrass}) has rank $1$ and its quadratic twist by $D$ has rank $0$.
\end{thm}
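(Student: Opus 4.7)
The approach is an explicit $2$-isogeny descent performed in parallel on $E := \calE_t$ with $t=a/b$ and on its quadratic twist $E^D$, exploiting the rational $2$-torsion point $(0,0)$ in (\ref{E:intro strategy weierstrass}). Letting $\phi\colon E \to E'$ be the $2$-isogeny with kernel $\{O,(0,0)\}$, a direct computation gives $E'\colon y^2 = x^3 - 8a(a+b)x^2 + 8a(a+b)^3 x$; writing $\hat\phi$ for the dual, the standard identity
\[
\rank E(K) + \dim_{\FF_2} E(K)[2] = \dim_{\FF_2}\tfrac{E(K)}{\hat\phi E'(K)} + \dim_{\FF_2}\tfrac{E'(K)}{\phi E(K)}
\]
reduces the problem to bounding $\dim_{\FF_2} S^{(\hat\phi)}(E/K) + \dim_{\FF_2} S^{(\phi)}(E'/K)$. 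Since $P_t = (-2a(a+b), 2a(a+b)^2)$ has infinite order outside a thin exceptional set (absorbable into the choice of $S$ and $U_\p$), already $\rank E(K) \geq 1$; the plan is to match this with a Selmer upper bound of $1$, while arranging the analogous bound of $0$ for $E^D$.

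The distinctness hypothesis is crucial: it guarantees that $(a),(a+b),(a-b)$ are three specific prime ideals of $\OO_{K,S}$, so the bad primes of $E$ outside $S \cup \{v\mid 2\}$ are exactly these three, each with a split or nonsplit multiplicative reduction type read off from $\Delta = 2^9 a^3 (a+b)^7 (a-b)^2$. Both Selmer groups embed in $K(S_0,2)$, where $S_0 := S \cup \{v\mid 2\infty\} \cup \{(a),(a+b),(a-b)\}$. Under the $\hat\phi$-descent, $(0,0) \mapsto B \equiv 2a(a-b) \pmod{K^{*2}}$ and $P_t \mapsto -2a(a+b)$; the $\phi$-descent on $E'$ is computed analogously, and the corresponding descent maps for $E^D$ differ by the twist class $D$ in each coordinate. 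Selmer membership for a square class $d$ supported on $S_0$ then becomes an $\FF_2$-linear system whose equations record local solvability of the associated homogeneous space at each $v \in S_0$—determined by the known reduction type at $(a),(a+b),(a-b)$, and by Hilbert symbols at the remaining places of $S_0$—tied together across places by the global reciprocity law $\sum_v(\alpha,\beta)_v = 0$.

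The set $S$ and the open subsets $U_\p$ are then chosen so that this linear system has exactly the ``expected'' rank: $S^{(\hat\phi)}(E/K)$ is spanned by the images of $(0,0)$ and $P_t$, $S^{(\phi)}(E'/K)$ is spanned by the images of $(0,0)'$ and $\phi(P_t)$, and the Selmer groups for $E^D$ collapse onto their $2$-torsion contributions. The mechanism is Chebotarev: pick primes of $S$ whose Frobenius classes in suitable ray class fields (and in $K(\sqrt D)/K$) introduce enough free variables that the local conditions in $U_\p$ can annihilate every unwanted Selmer class, simultaneously for $E$ and for $E^D$. The archimedean condition $0<a<b$ fixes the signs of $a$, $a+b$, $a-b$ at every real place of $K$, pinning down the images of the descent maps at archimedean places and closing off the remaining source of ambiguity there.

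The main obstacle is compatibility: a \emph{single} finite set $S$ and a \emph{single} collection $\{U_\p\}_{\p \in S}$ must work uniformly for every admissible pair $(a,b)$; must simultaneously control four Selmer groups (two each for $E$ and $E^D$); and must remain consistent with $(a),(a+b),(a-b)$ being distinct primes of $\OO_{K,S}$. Concretely, this amounts to showing that a structured family of $\FF_2$-matrices—whose entries encode Hilbert-symbol pairings between the three ``free'' primes $(a),(a+b),(a-b)$ and the fixed data coming from $S$, the primes above $2$, and the real places—always has the rank predicted by the visible rational points. This reciprocity-driven rank computation, generalizing the technical core of \cite{Zyw25a,Zyw25b}, is where the bulk of the work will lie.
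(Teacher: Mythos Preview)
Your framework matches the paper's---$2$-isogeny descent on $E$ and $E^D$ in parallel, with $S$ and $\{U_\p\}$ engineered via Chebotarev and reciprocity so that only the visible classes survive---but there is a genuine gap precisely at the point you flag as ``the main obstacle'': you never identify the mechanism that produces the \emph{asymmetry} between $E$ and $E^D$. In the paper this asymmetry lives entirely at the single prime $\q_2=(a+b)\OO_{K,S}$. The curve has Kodaira type $\operatorname{I}_1^*$ there, with Tamagawa number $4$ or $2$ according as $d$ is or is not a square in $K_{\q_2}$, so the local factor $\tfrac12|\operatorname{Im}(\delta_{d,\q_2})|$ equals $1$ for $d=1$ and $2$ for $d=D$ \emph{provided} $\q_2$ is inert in $K(\sqrt D)$. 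The paper forces this inertness by imposing, through the sets $U_\p$ for $\p$ in a layer $S_1$ of primes inert in $L$, a parity condition on $\sum_\p v_\p(a+b)$ together with $a+b\equiv 1\pmod{\n_0}$; a ray-class argument then pins the Artin symbol of $\q_2$. Nothing in your sketch generates this condition, and without it there is no reason the Selmer groups for $E^D$ should be any smaller than those for $E$. (Relatedly, your target dimensions are off: the paper shows $\Sel_{\phi_d}(E_d/K)=\{1,2a(a+b)\cdot(K^\times)^2\}$ has order $2$, not $4$, for both $d$; the asymmetry appears only in the dual Selmer group.)

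Two further points where the paper's argument is structurally sharper than your plan. First, it does not compute all four Selmer groups by Hilbert-symbol matrices: it computes $\Sel_{\phi_d}$ directly, then reads off $|\Sel_{\hat\phi_d}|$ from Cassels' global formula $|\Sel_\phi|/|\Sel_{\hat\phi}|=\prod_v\tfrac12|\operatorname{Im}\delta_v|$, each local term being a Tamagawa ratio $c_\p(E')/c_\p(E)$ computed once via Tate's algorithm. This halves the work and is what makes your ``compatibility'' problem tractable. Second, $S$ is built in four layers $S_0\cup S_1\cup S_2\cup S_3$ with separate roles: $S_0$ absorbs $2$, ramification, and makes $\OO_{K,S_0}$ a PID; $S_1$ balances the Cassels product and encodes the parity forcing $\q_2$ inert; $S_2$ (primes split in $L$) makes a unit-to-local-squares map surjective and supplies a distinguished $\pi_1$; $S_3$ cuts the residual kernel $V_d$ down to $\langle\pi_1\rangle$, after which a quadratic-reciprocity argument at $\q_3=(a-b)$ (with a sign $\varepsilon$ chosen in advance) kills $\pi_1$ itself. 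Your proposal gestures at Chebotarev and reciprocity but does not separate these roles, and without that decomposition it is not clear the ``structured family of $\FF_2$-matrices'' can be made to have the predicted rank uniformly in $(a,b)$.
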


The key idea of Theorem~\ref{T:true} is that the sets $U_\p$ are forcing very specific local conditions when performing the relevant $2$-descents.  The sets $S$ and $U_\p$ in our proof will be a little complicated but the general philosophy is that we are reverse engineering a successful $2$-descent.

In order to find $a$ and $b$ satisfying the conditions of Theorem~\ref{T:true}, we will use a result of Kai \cite[Proposition~13.2]{Kai} which generalizes work of Green, Tao and Ziegler to number fields.  With this result of Kai, we will see that Theorems~\ref{T:cor} and \ref{T:main} are easy consequences of Theorem~\ref{T:true}.   The work of Kai also shows up in the proofs of Theorem~\ref{T:KP} and \ref{T:ABHS}.  Since we are interested in primes in a $3$-term arithmetic progression $a-b,a,a+b$, it seems likely that one could also use classical analytic number theory as in \cite[\S3]{ABHS} to prove the existence of $a$ and $b$.

\subsection{Notation}

Let $K$ be a number field and let $\OO_K$ be its ring of integers.   For each nonzero prime ideal $\p$ of $\OO_K$, let $v_\p$ be the discrete valuation on $K$ normalized so that $v_\p(K^\times)=\ZZ$.   Let $|\cdot |_\p$ be the absolute value on $K$ for which $|a|_\p=|\OO_K/\p|^{-v_\p(a)}$.   Let $K_\p$ be the $\p$-adic completion of $K$ and let $\OO_\p$ be its valuation ring.

For a finite set $S$ of nonzero prime ideals of $\OO_K$, let $\OO_{K,S}$ be the ring of $S$-integers, i.e., the ring of $a\in K$ for which $v_\p(a)\geq 0$ for all nonzero prime ideals $\p\notin S$ of $\OO_K$.

For a place $v$ of $K$, we denote by $K_v$ the completion of $K$ at $v$.   When $v$ is a finite place of $K$, we will frequently switch between $v$ and the corresponding prime ideal $\p$ of $\OO_K$.

\section{$2$-descent background}
Fix a number field $K$. In this section we recall the Selmer group associated to a degree $2$ isogeny of an elliptic curve over $K$.   We also compute the local relations that will be needed for our Selmer group calculations.

\subsection{$2$-descent background} \label{SS:2-descent}
We first recall some background on descent via a $2$-isogeny.  See \cite[\S X.4]{Silverman}, and in particular \cite[\S X.4 Example 4.8]{Silverman}, for details.

Let $E$ be an elliptic curve over a number field $K$ given by a Weierstrass equation $y^2=x^3+\alpha x^2+\beta x$ with $\alpha,\beta\in K$.   Let $E'/K$ be the elliptic curve defined by $y^2=x^3+\alpha'x^2+\beta'x$, where $\alpha':=-2\alpha$ and $\beta':=\alpha^2-4\beta$.  There is a degree $2$ isogeny $\phi\colon E\to E'$ given by $\phi(x,y)=(y^2/x^2, y(\beta-x^2)/x^2)$.  Let $\hat{\phi}\colon E'\to E$ be the dual isogeny of $\phi$.  The kernel of $\phi$ and the kernel of $\hat{\phi}$ are generated by the $2$-torsion point $(0,0)$ of $E$ and $E'$, respectively.

Set $\Gal_K:=\Gal(\Kbar/K)$.  Starting with the short exact sequence $0\to \ker \phi \to E \xrightarrow{\phi} E'\to 0$ and taking Galois cohomology yields an exact sequence
\[
0 \to \ker \phi \to E(K)\xrightarrow{\phi} E'(K) \xrightarrow{\delta} H^1(\Gal_K, \ker \phi).
\]
Since $\ker \phi$ and $\{\pm 1\}$ are isomorphic $\Gal_K$-modules, we have a natural isomorphism
\begin{align} \label{E:H1 isom}
H^1(\Gal_K, \ker \phi )\xrightarrow{\sim} H^1(\Gal_K, \{\pm 1\}) \xrightarrow{\sim} K^\times/(K^\times)^2.
\end{align}
Using the isomorphism (\ref{E:H1 isom}), we may view $\delta$ as a homomorphism 
\[
\delta\colon E'(K)\to  K^\times/(K^\times)^2
\]   
whose kernel is $\phi(E(K))$.
For any point $(x,y)\in E'(K)-\{0,(0,0)\}$, we have $\delta((x,y))=x\cdot (K^\times)^2$. We also have $\delta(0)=1$ and $\delta((0,0))=\beta' \cdot (K^\times)^2$.   

For each place $v$ of $K$, a similar construction defines a homomorphism
\[
\delta_v \colon E'(K_v) \to K_v^\times/(K_v^\times)^2
\]
with kernel $\phi(E(K_v))$.   We can identify the \defi{$\phi$-Selmer group} of $E/K$, which we denote by $\Sel_\phi(E/K)$, with the subgroup of $K^\times/(K^\times)^2$ consisting of those square classes whose image in $K_v^\times/(K_v^\times)^2$ lies in $\operatorname{Im}(\delta_v)$ for all places $v$ of $K$.  The image of $\delta$ lies in $\Sel_\phi(E/K)$ and hence we have an injective homomorphism
\[
E'(K)/\phi(E(K)) \hookrightarrow \Sel_\phi(E/K).
\]
Similar, we have an injective homomorphism $E(K)/\hat\phi(E'(K)) \hookrightarrow \Sel_{\hat\phi}(E'/K)$.  These groups will help us understand the rank of $E$ via the exact sequence
\begin{align} \label{E:exact MW}
0\to \frac{\langle (0,0) \rangle}{\phi(E(K)[2])} \to \frac{E'(K)}{\phi(E(K))} \xrightarrow{\hat\phi} \frac{E(K)}{2E(K)} \to \frac{E(K)}{\hat{\phi}(E'(K))} \to 0.
\end{align}

The following lemma links the cardinalities of $\Sel_\phi(E/K)$ and $\Sel_{\hat\phi}(E'/K)$.

\begin{lemma} \label{L:Selmer ratio}
\begin{romanenum}
\item \label{L:Selmer ratio i}
We have $|\Sel_\phi(E/K)|/|\Sel_{\hat\phi}(E'/K)| = \prod_v \tfrac{1}{2} |\operatorname{Im}(\delta_v)|$, where the product is over the places $v$ of $K$.
\item \label{L:Selmer ratio ii}
Let $\p$ be a nonzero prime ideal of $\OO_K$ that does not divide $2$.   Then 
\[
\tfrac{1}{2} |\operatorname{Im}(\delta_\p)| = c_\p(E')/c_\p(E),
\] 
where $c_\p(E)$ and $c_\p(E')$ are the Tamagawa numbers of $E$ and $E'$, respectively, at $\p$. 
\item \label{L:Selmer ratio iii}
If $v$ is a real place of $K$ and $\beta<0$ in $K_v$, then $\operatorname{Im}(\delta_{v})=1$.
\end{romanenum}
\end{lemma}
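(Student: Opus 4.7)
This lemma packages three standard facts about descent via a $2$-isogeny, which I would handle with three distinct tools: global Poitou--Tate duality for (i), a Néron-model comparison via Schaefer's local formula for (ii), and an explicit real analysis of the Weierstrass equations for (iii).

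For (i), the plan is to view $\Sel_\phi(E/K)$ and $\Sel_{\hat\phi}(E'/K)$ as Selmer groups for the self-dual trivial Galois module $M:=\ker\phi\cong\mu_2$, cut out by the local conditions $W_v:=\operatorname{Im}(\delta_v)\subseteq H^1(\Gal_{K_v},M)\cong K_v^\times/(K_v^\times)^2$. The crucial input is local Tate duality applied to the isogeny $\phi$: under the Hilbert symbol pairing on $K_v^\times/(K_v^\times)^2$, the annihilator $W_v^{\perp}$ coincides with $\operatorname{Im}(\hat\delta_v)$. The Greenberg--Wiles formula (a packaging of global Poitou--Tate duality) then gives
\[
\frac{|\Sel_\phi(E/K)|}{|\Sel_{\hat\phi}(E'/K)|}=\frac{|H^0(\Gal_K,M)|}{|H^0(\Gal_K,M)|}\prod_v \frac{|W_v|}{|H^0(\Gal_{K_v},M)|}=\prod_v\tfrac{1}{2}|\operatorname{Im}(\delta_v)|,
\]
since both global $H^0$ factors are $\{\pm 1\}$ and $|H^0(\Gal_{K_v},\mu_2)|=2$ at every place. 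The main technical obstacle in the whole lemma is this local Tate duality compatibility (matching the cup-product/Weil-pairing description with the Hilbert symbol under the identification~(\ref{E:H1 isom})); once it is in hand, the remainder of (i) is a direct application of Greenberg--Wiles.

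For (ii), I would invoke Schaefer's local identity
\[
|E'(K_\p)/\phi E(K_\p)|=|E(K_\p)[\phi]|\cdot\frac{c_\p(E')/c_\p(E)}{|\phi'(0)|_\p},
\]
which is obtained by comparing the Néron-model exact sequences $0\to E_0(K_\p)\to E(K_\p)\to\Phi_\p(k_\p)\to 0$ for $E$ and $E'$ via the snake lemma, combined with Tate's analysis of $\phi$ on the formal group. When $\p\nmid 2=\deg\phi$, the isogeny $\phi$ is \'etale at the identity, so $|\phi'(0)|_\p=1$; and since the full kernel $(0,0)$ lies in $E(K)$ we have $|E(K_\p)[\phi]|=2$. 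Dividing by $2$ yields the stated equality.

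Part (iii) is a direct calculation in $K_v=\RR$. The formulas for $\delta_v$ give $\delta_v((0,0))=\beta'\cdot(\RR^\times)^2$ and $\delta_v((x,y))=x\cdot(\RR^\times)^2$ for any remaining nonzero point. Since $\beta<0$, we have $\beta'=\alpha^2-4\beta>0$, so $\delta_v((0,0))=1$. For any other real point, the Weierstrass equation reads
\[
y^2 \;=\; x\bigl((x-\alpha)^2-4\beta\bigr),
\]
and the bracketed factor is strictly positive (again because $-4\beta>0$), so $y^2\ge 0$ forces $x>0$, giving $\delta_v((x,y))=1$ and hence $\operatorname{Im}(\delta_v)=1$.
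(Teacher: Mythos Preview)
Your argument is correct in all three parts, but your route differs from the paper's in two of them. For (i), the paper invokes Cassels' global duality theorem directly: writing $T(E/E')=\prod_v |\operatorname{coker}\phi_v|/|\ker\phi_v|$, Cassels shows $T(E/E')=|\Sel_\phi|/|\Sel_{\hat\phi}|$, and then one simply observes that each local factor equals $\tfrac{1}{2}|\operatorname{Im}(\delta_v)|$. Your Greenberg--Wiles approach is the modern cohomological repackaging of the same underlying duality; it is arguably cleaner conceptually but requires you to verify the local orthogonality $\operatorname{Im}(\delta_v)^\perp=\operatorname{Im}(\hat\delta_v)$ under the Hilbert symbol, which you correctly flag as the real content. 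For (ii), your Schaefer formula and the paper's citation of Dokchitser--Dokchitser are the same computation with different attributions. For (iii), the paper again defers to a black-box result of Dokchitser--Dokchitser on $\tau_v$ at real places, whereas your direct factorisation $y^2=x\bigl((x-\alpha)^2-4\beta\bigr)$ on $E'$ is more elementary and entirely self-contained; this is a genuine improvement in exposition, since it makes the real-place vanishing transparent without any external input.
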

\begin{proof}
For each place $v$ of $K$, let $\phi_v\colon E(K_v)\to E'(K_v)$ be the homomorphism obtained from the isogeny $\phi$.   Define $T(E/E'):=\prod_{v} |\operatorname{coker}(\phi_v)|/|\ker(\phi_v)|$, where the product is over the places $v$ of $K$.   From \cite[Lemma~3.1 and equation (3.6)]{MR179169}, we find that all but finitely many terms of the product $T(E/E')$ are $1$ and hence $T(E/E')$ is well-defined.   Note that our product is the inverse of Cassels' but our notation is compatible by equation (1.23) of \cite{MR179169}.  By Theorem~1.1 of \cite{MR179169}, we have
\[
T(E/E')= \frac{|\Sel_\phi(E/K)|}{|\Sel_{\hat\phi}(E'/K)|} \cdot \frac{|\ker(\hat{\phi})(K)|}{|\ker({\phi})(K)|} = \frac{|\Sel_\phi(E/K)|}{|\Sel_{\hat\phi}(E'/K)|}.
\]
From our two expressions for $T(E/E')$, we find that $|\Sel_\phi(E/K)|/|\Sel_{\hat\phi}(E'/K)| = \prod_v \tau_v$, where $\tau_v:=|\operatorname{coker}(\phi_v)|/|\ker(\phi_v)|$.   We have $\tau_v=\tfrac{1}{2} |\operatorname{Im}(\delta_v)|$ since $\ker(\phi_v)=\langle (0,0) \rangle$ and $\operatorname{Im}(\delta_v) \cong E'(K_v)/\phi(E(K_v))$.  This completes the proof of (\ref{L:Selmer ratio i}).

Fix a nonzero prime ideal $\p\nmid 2$ of $\OO_K$.  For a nonsingular Weierstrass model $y^2+a_1xy+a_3y=x^3+a_2x^2+a_4x+a_6$ over $K$, we have a corresponding invariant differential $dx/(2y+a_1x+a_3)$ on the elliptic curve defined by this model.    Let $\omega$ be an invariant differential on $E$ arising from a Weierstrass model of $E$ that is minimal at $\p$.  Let $\omega'$ be an invariant differential on $E'$ arising from a Weierstrass model of $E'$ that is minimal at $\p$.   There is a unique $\alpha\in K^\times$ such that $\phi^*\omega'=\alpha \omega$.  By \cite[Lemma~4.2]{MR3324930}, we have
\[
\tfrac{1}{2} |\operatorname{Im}(\delta_\p)| = \frac{|\operatorname{coker}(\phi_\p)|}{|\ker(\phi_\p)|} = |\alpha|_\p^{-1} \frac{c_\p(E')}{c_\p(E)}.
\]
Since $\p\nmid 2=\deg \phi$, \cite[Lemma~4.3]{MR3324930} implies that $|\alpha|_\p^{-1}=1$ which proves (\ref{L:Selmer ratio ii}).

Finally fix a real place $v$ of $K$ for which $\beta<0$.   We have $\tau_v=1/2$ by \cite[Proposition~7.6]{MR3324930}.  Therefore, $|\operatorname{Im}(\delta_v)| = 2\tau_v=1$ and (\ref{L:Selmer ratio iii}) follows.
\end{proof}

\subsection{Our curves and local conditions} \label{SS:our curves 1}

Fix $a,b\in K$ such that $ab(a+b)(a-b)\neq 0$.  Take any $d\in K^\times$ and let $E_d$ be the elliptic curve over $K$ defined by the Weierstrass equation
\begin{align} \label{E:main}
y^2=x^3+4a(a+b) d\, x^2 +2a(a+b)^2(a-b)  d^2\, x;
\end{align}
its discriminant is $\Delta_d :=2^9 a^3 (a+b)^7 (a-b)^2 \cdot d^6 \neq 0$.   From \S\ref{SS:2-descent}, there is a degree $2$ isogeny $\phi_d \colon E_d\to E'_d$ with $\phi_d((0,0))=0$, where $E'_d$ is the elliptic curve over $K$ defined by the Weierstrass equation
\begin{align}\label{E:main2}
y^2=x^3-8a(a+b) d \, x^2 +8a(a+b)^3  d^2 \, x.
\end{align}
The discriminant of the Weierstrass model (\ref{E:main2}) is $\Delta'_d:=2^{15} a^3 (a+b)^8(a-b)\cdot d^6$.  As in \S\ref{SS:2-descent}, we have a homomorphism
\[
\delta_{d,v}\colon E'_d(K_v) \to K_v^\times/(K_v^\times)^2 
\]
with kernel $\phi(E_d(K_v))$ for every place $v$ of $K$.   

We now describe the image of $\delta_{d,v}$ in various cases that will arise in our proof of the main theorems.   For each nonzero prime ideal $\p$ of $\OO_K$, we have an inclusion $\OO_\p^\times/(\OO_\p^\times)^2 \hookrightarrow K_\p^\times/(K_\p^\times)^2$.

\begin{lemma} \label{L:Selmer}
Let $\p$ be a prime ideal of $\OO_K$ with $v_\p(2)=v_\p(d)=0$.  Define \[
m:=(v_\p(a), v_\p(a+b), v_\p(a-b) ).\]
\begin{romanenum}
\item \label{L:Selmer i}
If $m=(0,0,0)$, then $\operatorname{Im}(\delta_{d,\p}) = \OO_\p^\times/(\OO_\p^\times)^2$ and hence $|\operatorname{Im}(\delta_{d,\p})|=2$.

\item \label{L:Selmer ii}
If $m=(0,0,1)$, then $|\operatorname{Im}(\delta_{d,\p})|=1$.

\item \label{L:Selmer iii}
If $m=(1,0,0)$, then $|\operatorname{Im}(\delta_{d,\p})|=2$  and $\operatorname{Im}(\delta_{d,\p}) \neq \OO_\p^\times/(\OO_\p^\times)^2$.  

\item \label{L:Selmer iv} 
If $m=(0,1,0)$ and $d$ is a square in $K_\p$, then $|\operatorname{Im}(\delta_{d,\p})|=2$. 

\item \label{L:Selmer v}
If $m=(0,1,0)$ and $d$ is not a square in $K_\p$, then $|\operatorname{Im}(\delta_{d,\p})|=4$. 

\item \label{L:Selmer vi}
If $m=(-1,1,-1)$ and $-2a(a+b)d$ is a square in $K_\p$, then $|\operatorname{Im}(\delta_{d,\p})| = 4$.

\item \label{L:Selmer vii}
If $m=(-1,1,-1)$ and $-2a(a+b)d$ is not a square in $K_\p$, then $|\operatorname{Im}(\delta_{d,\p})| = 2$.

\item \label{L:Selmer viii} 
If $m=(0,2,0)$ and $-2a(a+b)d$ is a square in $K_\p$, then $|\operatorname{Im}(\delta_{d,\p})|=4$. 

\item \label{L:Selmer ix}
If $m=(0,2,0)$ and $-2a(a+b)d$ is not a square in $K_\p$, then $|\operatorname{Im}(\delta_{d,\p})|=2$. 
\end{romanenum}
\end{lemma}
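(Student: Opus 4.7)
The plan is to apply Lemma~\ref{L:Selmer ratio}(\ref{L:Selmer ratio ii}) to express $|\operatorname{Im}(\delta_{d,\p})|$ as $2\, c_\p(E'_d)/c_\p(E_d)$, and to compute the Tamagawa numbers case by case using Tate's algorithm on minimal Weierstrass models at $\p$. For each valuation pattern $m$, I would first compute the $\p$-adic valuations of $\Delta_d$, $\Delta'_d$, and the $c_4, c_6$ invariants of both curves; this determines the Kodaira types after testing minimality (and, in cases (\ref{L:Selmer viii}) and (\ref{L:Selmer ix}), first performing the substitution $(x, y) \mapsto (\pi^2 X, \pi^3 Y)$ with $\pi$ a uniformizer of $\p$). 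The Tamagawa numbers then follow from the Kodaira type tables.

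Cases (\ref{L:Selmer i})--(\ref{L:Selmer iii}) are essentially direct. In (\ref{L:Selmer i}) both curves have good reduction, so $c_\p = 1$ on both sides. In (\ref{L:Selmer ii}), $E_d$ has type $I_2$ and $E'_d$ has type $I_1$, giving the ratio $1/2$. In (\ref{L:Selmer iii}), both curves reduce to $y^2 \equiv x^3 \pmod{\p}$ with $v_\p(\Delta) = 3$, hence Kodaira type $\mathrm{III}$ with $c_\p = 2$. For the additional claim in (\ref{L:Selmer iii}) that $\operatorname{Im}(\delta_{d,\p}) \neq \OO_\p^\times/(\OO_\p^\times)^2$, I would use the explicit formula $\delta_{d,\p}((0,0)) = \beta' \cdot (K_\p^\times)^2 = 8a(a+b)^3 d^2 \cdot (K_\p^\times)^2$: when $v_\p(a) = 1$, this square class has odd $\p$-adic valuation and so is not a unit class, while the image has order $2$ by the Tamagawa calculation, so it cannot coincide with $\OO_\p^\times/(\OO_\p^\times)^2$.

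The cases (\ref{L:Selmer vi})--(\ref{L:Selmer ix}) all lead to minimal models of type $I_2$ for $E_d$ and $I_4$ for $E'_d$. Here the Tamagawa numbers depend on whether these $I_n$ fibers are split. Reducing mod $\p$ and locating the node on each curve, one computes that the squared slope of the branches at the node equals (up to squares in $\OO_\p^\times$) the single quantity $-2a(a+b)d$; hence $E_d$ and $E'_d$ are simultaneously split or non-split, controlled by whether $-2a(a+b)d$ is a square in $K_\p^\times$. Using $c_\p(I_n) = n$ in the split case and $c_\p(I_n) = \gcd(n, 2) = 2$ in the non-split case (for $n$ even), the ratios $c_\p(E'_d)/c_\p(E_d)$ come out to $4/2 = 2$ and $2/2 = 1$, respectively, which produces the stated values of $|\operatorname{Im}(\delta_{d,\p})|$.

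The main obstacle is cases (\ref{L:Selmer iv})/(\ref{L:Selmer v}), where $E_d$ has type $I_1^*$ and $E'_d$ has type $I_2^*$ independently of the unit $d$, but the Tamagawa numbers depend on the Galois action on the geometric component groups $\ZZ/4\ZZ$ and $(\ZZ/2\ZZ)^2$. Since $E_d$ is the quadratic twist of $E_1$ by $d$ and likewise on the primed side, this Galois action is twisted by the quadratic character of $d$. The hard step is to run Tate's algorithm explicitly on the $I_n^*$ fibers, isolating the quadratic polynomial whose splitting over $k(\p)$ governs the Galois action, and to verify that its discriminant is $d$ up to squares; from this it follows that $c_\p(E'_d)/c_\p(E_d)$ equals $1$ when $d$ is a square in $K_\p$ and $2$ when it is not.
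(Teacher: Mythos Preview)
Your approach is the same as the paper's: apply Lemma~\ref{L:Selmer ratio}(\ref{L:Selmer ratio ii}) and compute Tamagawa numbers via Tate's algorithm, case by case, with the same Kodaira types and the same change of model in the $(0,2,0)$ case.

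One genuine omission: in case (\ref{L:Selmer i}) you only establish $|\operatorname{Im}(\delta_{d,\p})|=2$, not the stated equality $\operatorname{Im}(\delta_{d,\p})=\OO_\p^\times/(\OO_\p^\times)^2$. The cardinality does not by itself identify which order-$2$ subgroup of $K_\p^\times/(K_\p^\times)^2$ you have landed in. The paper supplies the missing containment by checking that $\delta_{d,\p}((0,0))=2a(a+b)\cdot(K_\p^\times)^2$ is a unit class and that for any other point $(x,y)\in E'_d(K_\p)$ the valuation $v_\p(x)$ is even (a short Newton-polygon argument on \eqref{E:main2}).

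In cases (\ref{L:Selmer iv})--(\ref{L:Selmer v}) your twisting heuristic is a legitimate alternative to the paper's direct run of Tate's algorithm, but as written it is slightly misleading. The component group for $E'_d$ at an $I_2^*$ fiber is $(\ZZ/2\ZZ)^2$, on which multiplication by $-1$ is trivial; hence twisting by $d$ cannot change $c_\p(E'_d)$, and indeed the paper finds $c_\p(E'_d)=4$ for all $d$. Only $c_\p(E_d)$ (with cyclic component group $\ZZ/4\ZZ$) is sensitive to the quadratic character of $d$, and you still need one honest Tate's-algorithm computation (say for $d=1$) to pin down the base values $c_\p(E_1)=c_\p(E'_1)=4$. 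Your stated ratio is correct, but the phrase ``verify that its discriminant is $d$ up to squares'' should refer only to the $I_1^*$ side.
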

\begin{proof}
By Lemma~\ref{L:Selmer ratio}(\ref{L:Selmer ratio ii}), we have $|\operatorname{Im}(\delta_{d,\p})| = 2c_\p(E'_d)/c_\p(E_d)$.   We now compute these Tamagawa numbers by using Tate's algorithm, cf.~\cite[Algorithm 9.4]{SilvermanII}.   The applications of Tate's algorithm below are straightforward; in each case, we give a specific Weierstrass model that is minimal at $\p$ for which the algorithm can be applied directly as in \cite[Algorithm 9.4]{SilvermanII} without having to change the model.  

First consider the case $m=(0,0,0)$.   Since $E_d$ and $E'_d$ have good reduction at $\p$, we have $|\operatorname{Im}(\delta_{d,\p})| = 2c_\p(E'_d)/c_\p(E_d)=2\cdot 1/1=2$.  We have $\delta_{d,\p}((0,0))= 2a(a+b) \cdot (K_\p^\times)^2$ and $2a(a+b)\in \OO_\p^\times$.   Now take any point $(x,y) \in E'_d(K_\p)-\{0,(0,0)\}$.   Since $\delta_{d,\p}((x,y))=x\cdot (K_\p^\times)^2$, to complete the proof of (\ref{L:Selmer i}) it suffices to show that $v_\p(x)$ is even.  Since $v_\p(2)=v_\p(a)=v_\p(a+b)=0$, from (\ref{E:main2}) we find that $2v_\p(y)=v_\p(y^2)=v_\p(x)$ if $v_\p(x)>0$ and $2v_\p(y)=v_\p(y^2)=3v_\p(x)$ if $v_\p(x)<0$.   Therefore, $v_\p(x)$ must be even.

Consider the case $m=(0,0,1)$.   We have $v_\p(\Delta_d)=2$ and $v_\p(\Delta'_d)=1$. Applying Tate's algorithm with (\ref{E:main}) we find that $E_d$ has Kodaira symbol $\operatorname{I}_2$ at $\p$ and hence $c_\p(E_d)=2$.  Replacing $x$ by $x+4a(a+b)d$ in the model (\ref{E:main2}), we obtain another Weierstrass model
\begin{align*} 
y^2=x^3+4a(a+b)dx^2-8a(a+b)^2(a-b)d^2x -32a^2(a+b)^3(a-b)d^3
\end{align*}
for $E'_d$.  Applying Tate's algorithm to this model, we find that $E'_d$ has Kodaira symbol $\operatorname{I}_1$ at $\p$ and hence $c_\p(E'_d)=1$.  Therefore, $|\operatorname{Im}(\delta_{d,\p})| = 2 \cdot 1/2=1$.

Consider the case $m=(1,0,0)$.  Note that $a$ is a uniformizer of $K_\p$.  Applying Tate's algorithm with (\ref{E:main}) and (\ref{E:main2}), we find that $E_d$ and $E'_d$ both have Kodaira symbol $\operatorname{III}$ at $\p$ and hence $c_\p(E_d)=c_\p(E'_d)=2$.  Therefore, $|\operatorname{Im}(\delta_{d,\p})| = 2 \cdot 2/2=2$.  We have $\operatorname{Im}(\delta_{d,\p}) \neq \OO_\p^\times/(\OO_\p^\times)^2$ since $\delta_{d,\p}((0,0))=2a(a+b)\cdot (K_\p^\times)^2$ and $v_\p(2a(a+b))=1$.

Consider the case $m=(0,1,0)$.  Note that $a+b$ is a uniformizer of $K_\p$.   Replacing $x$ by $x-2a(a+b)d$ in the model (\ref{E:main}), we obtain another Weierstrass model
\begin{align}\label{E:main-redux}
y^2=x^3-2a(a+b) d x^2-2a(a+b)^3 d^2 x +4a^2(a+b)^4 d^3
\end{align}
for $E_d$.  Applying Tate's algorithm to this model and using $v_\p(\Delta_d)=7$, we find that $E_d$ has Kodaira symbol $\operatorname{I}_1^*$ at $\p$ with $c_\p(E_d)=4$ if $d$ is a square in $K_\p$ and $c_\p(E_d)=2$ otherwise.   Applying Tate's algorithm with (\ref{E:main2}) and using $v_\p(\Delta_{d}')=8$, we find that $E'_d$ has Kodaira symbol $\operatorname{I}_2^*$ at $\p$ with $c_\p(E'_d)=4$.    Therefore, $|\operatorname{Im}(\delta_{d,\p})|$ is equal to $2 \cdot 4/4=2$ if $d$ is a square in $K_\p$ and is equal to $2\cdot 4/2=4$ otherwise.  

Consider the case where $m=(-1,1,-1)$.    Applying Tate's algorithm to the model (\ref{E:main-redux}) for $E_d$, and using $v_\p(\Delta)=2$, $v_\p(2a(a+b))=0$ and $v_\p(2a(a+b)^3)=v_\p(4a^2(a+b)^4)=2$, we find that $E_d$ has Kodaira symbol $\operatorname{I}_2$ at $\p$ and hence $c_\p(E_d)=2$.  Using that $v_\p(\Delta'_d)=4$, $v_\p(8a(a+b))=0$ and $v_\p(8a(a+b)^3)=2$, Tate's algorithm applied to (\ref{E:main2}) shows that $E'_d$ has Kodaira symbol $\operatorname{I}_4$ at $\p$ and has split multiplication at $\p$ if and only if $-2a(a+b)d$ is a square in $K_\p$.   Therefore, $c_\p(E'_d)=4$ if $-2a(a+b)d$ is a square in $K_\p$ and $c_\p(E'_d)=2$ otherwise.   Therefore, $|\operatorname{Im}(\delta_{d,\p})|$ is equal to $2 \cdot 4/2=4$ if $-2a(a+b)d$ is a square in $K_\p$ and is equal to $2\cdot 2/2=2 $ otherwise.   

Consider the case $m=(0,2,0)$.  Fix a uniformizer $\pi$ of $K_\p$.  Dividing both sides of (\ref{E:main-redux}) by $\pi^6$ and changing coordinates we obtain a model
\begin{align} \label{E:new E 020}
y^2=x^3-2a(\tfrac{a+b}{\pi^2}) d x^2-2a(\tfrac{a+b}{\pi^2})^3 \pi^2 d^2 x +4a^2(\tfrac{a+b}{\pi^2})^4 \pi^2d^3
\end{align}
for $E_d$.   The Weierstrass equation (\ref{E:new E 020}) has coefficients in $\OO_\p$ with $-2a(\tfrac{a+b}{\pi^2})d \in \OO_\p^\times$ and its discriminant is $2^9a^3(a+b)^7(a-b)^2d^6 /\pi^{12}$ which has $\p$-adic valuation $2$.  Tate's algorithm shows that $E_d$ has Kodaira symbol $\operatorname{I}_2$ at $\p$ and hence $c_\p(E)=2$.  Dividing both sides of (\ref{E:main2}) by $\pi^6$ and changing coordinates we obtain a model
\begin{align} \label{E:new2 E 020}
y^2=x^3-8a(\tfrac{a+b}{\pi^2}) d  x^2 +8a(\tfrac{a+b}{\pi^2})^3  \pi^2 d^2  x.\end{align}
for $E_d$.   The Weierstrass equation (\ref{E:new2 E 020}) has coefficients in $\OO_\p$ with $-8a(\tfrac{a+b}{\pi^2})d \in \OO_\p^\times$ and its discriminant is $2^{15}a^3(a+b)^8(a-b)d^6/\pi^{12}$ which has $\p$-adic valuation $4$.  Tate's algorithm shows that $E_d$ has Kodaira symbol $\operatorname{I}_4$ at $\p$ and has split multiplicative reduction at $\p$ if and only if $-2a(a+b)d$ is a square in $K_\p$.   Therefore, $c_\p(E')=4$ if $-2a(a+b)d$ is a square in $K_\p$ and $c_\p(E')=2$ otherwise.  Therefore, $|\operatorname{Im}(\delta_{d,\p})|$ is equal to $2 \cdot 4/2=4$ if $-2a(a+b)d$ is a square in $K_\p$ and is equal to $2\cdot 2/2=2 $ otherwise.   
\end{proof}

The following easy lemma will be used later to ensure that the local conditions we impose can be satisfied.

\begin{lemma} \label{L:local existence}
Let $\p\nmid 2$ be a nonzero prime ideal of $\OO_K$ and fix a uniformizer $\omega$ of $K_\p$.
\begin{romanenum}
\item \label{L:easy i}
There are $a,b\in K_\p^\times$ such that $(v_\p(a),v_\p(a+b),v_\p(a-b))=(0,1,0)$.

\item \label{L:easy ii}
There are $a,b\in K_\p^\times$ such that 
$(v_\p(a),v_\p(a+b),v_\p(a-b))=(-1,1,-1)$, $-2a(a+b)$ is a square in $K_\p$, and $(a-b)\omega$ is a square in $K_\p$.

\item \label{L:easy iii}
There are $a,b\in K_\p^\times$ such that 
$(v_\p(a),v_\p(a+b),v_\p(a-b))=(-1,1,-1)$, $-2a(a+b)$ is a square in $K_\p$, and $(a-b)\omega$ is not a square in $K_\p$.

\item \label{L:easy iv}
There are $a,b\in K_\p^\times$ such that  $(v_\p(a),v_\p(a+b),v_\p(a-b))=(0,2,0)$ and $-2a(a+b)$ is a square in $K_\p$.
\item \label{L:easy v}
There are $a,b\in K_\p^\times$ such that  $(v_\p(a),v_\p(a+b),v_\p(a-b))=(0,0,1)$ and $a$ is not a square in $K_\p$.
\end{romanenum}
\end{lemma}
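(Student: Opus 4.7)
The plan is to handle all five parts by the same elementary recipe: parametrize the pair $(a,b)$ explicitly in terms of the uniformizer $\omega$ and one or two auxiliary units in $\OO_\p^\times$, then translate each of the requested square/non-square conditions on elements of $K_\p^\times$ into a residue-class condition modulo $\p$. This reduction is legitimate because $\p \nmid 2$: by Hensel's lemma, a unit $u \in \OO_\p^\times$ is a square in $K_\p$ if and only if $u \bmod \p$ is a square in $(\OO_\p/\p)^\times$. Combined with the fact that squares have index exactly $2$ in $(\OO_\p/\p)^\times$, we can always select the auxiliary units to meet any finite list of mod-$\p$ square conditions.

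For (i), I would simply take $a = 1$ and $b = -1 + \omega$, giving $a+b = \omega$ and $a - b = 2 - \omega$, which is a unit since $v_\p(2) = 0$.

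For (ii) and (iii), I would write $a = c\omega^{-1}$ and $a+b = d\omega$ with units $c, d \in \OO_\p^\times$, so that $b = d\omega - c\omega^{-1}$ and $a - b = (2c - d\omega^2)\omega^{-1}$; the valuation pattern $(-1,1,-1)$ is automatic. Then $-2a(a+b) = -2cd$ is a unit whose square class depends only on $-2cd \bmod \p$, and $(a-b)\omega = 2c - d\omega^2 \equiv 2c \pmod{\omega^2}$ is a square in $K_\p$ iff $2c$ is a square mod $\p$. For (ii), choose $c$ so that $2c$ is a square mod $\p$ and then $d$ so that $-2cd$ is a square mod $\p$; for (iii), choose $c$ so that $2c$ is a non-square mod $\p$ and then $d$ so that $-2cd$ is a square mod $\p$. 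Both choices are possible since squares form a coset of index $2$.

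For (iv), I would take $a = c$ and $a + b = d\omega^2$ with $c,d \in \OO_\p^\times$, so that $a-b = 2c - d\omega^2$ is a unit; the quantity $-2a(a+b) = -2cd\omega^2$ has the same square class as $-2cd$, so pick $c,d$ with $-2cd$ a square mod $\p$. For (v), I would let $c \in \OO_\p^\times$ be any unit whose reduction is a non-square (so that $a = c$ is not a square in $K_\p$ by Hensel), choose any unit $e$, and set $b = c - \omega e$; then $a - b = \omega e$ and $a + b = 2c - \omega e$ is a unit because $2c$ is.

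There is no substantive obstacle here; the only care required is to verify in each case that the claimed valuations hold after the substitutions and that Hensel's lemma applies to each square-class condition. Both points follow immediately from $\p \nmid 2$ and from reading off the $\p$-adic valuations of the parametric expressions.
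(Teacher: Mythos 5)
Your proof is correct and takes essentially the same approach as the paper: write down explicit parametrizations of $(a,b)$ in terms of $\omega$ and auxiliary units, read off the valuations, and use Hensel's lemma (valid since $\p\nmid 2$) to reduce each square-class condition to a mod-$\p$ condition that can always be met because squares have index two in the residue field. The paper's choices are marginally more streamlined (e.g., for (ii)/(iii) it uses a single parameter $u$ chosen so that $-2a(a+b)=1$ automatically, rather than two free units), but the underlying method and level of rigor are identical.
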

\begin{proof}
For (\ref{L:easy i}), we can take $(a,b)=(1,-1-\omega)$.  For (\ref{L:easy ii}) and (\ref{L:easy iii}), we can take $(a,b)=(\tfrac{1}{2}u\omega^{-1},-\tfrac{1}{2}u\omega^{-1}-u^{-1} \omega)$ with $u\in \OO_\p^\times$; $(a-b)\omega$ is a square in $K_\p$ if and only if $u$ is a square modulo $\p$.  For (\ref{L:easy iv}), we can take $(a,b)=(1,-1-\tfrac{1}{2} \omega^2)$.  For (\ref{L:easy v}), we can take $(a,b)=(u,u+\omega)$ where $u\in \OO_\p^\times$ is not a square modulo $\p$.
\end{proof}

\section{Proof of Theorem~\ref{T:true}} \label{S:main proof}

Define $L:=K(\sqrt{D})$; it is a quadratic extension of $K$.   Let $S_0$ be a finite set of nonzero prime ideals of $\OO_K$ that contains all those that divide $2$, are ramified in $L$ or satisfy $v_\p(D)\neq 0$.  By increasing the finite set $S_0$ appropriately, we may assume that $\OO_{K,S_0}$ is a PID.  For a nonzero prime ideal $\p\notin S_0$ of $\OO_K$, $D$ is a square in $K_\p$ if and only if $\p$ splits in $L$.  

Let $\n$ be the conductor of the abelian extension $L/K$ and let $\n_0\subseteq \OO_K$ be the corresponding integral ideal.  The prime ideals of $\OO_K$ that divide $\n_0$ are precisely those that are ramified in $L$.  Let $\Cl_K^{\n}$ be the ray class group for the modulus $\n$.
By class field theory, there is a unique  group homomorphism 
\[
\psi\colon \Cl_K^{\n}\to \{\pm 1\}
\]
such that for any nonzero prime ideal $\p\nmid \n_0$ of $\OO_K$ we have $\psi([\p])=1$ if and only if $\p$ splits in $L$.

Let $S_\infty$ be the set of infinite places of $K$.      

\subsection{The curves} \label{SS:The curves}
Consider fixed $a,b \in K$ for which $ab(a+b)(a-b)\neq 0$ and hence $a$, $a+b$ and $a-b$ are distinct and nonzero.   Recall that for $d\in K^\times$, we defined an elliptic curve $E_d$ over $K$ by
\begin{align} \label{E:main proof}
y^2=x^3+4a(a+b) d\,  x^2 +2a(a+b)^2(a-b)  d^2\, x.
\end{align}
From \S\ref{SS:2-descent}, there is a degree $2$ isogeny $\phi_d \colon E_d\to E'_d$ with $\phi_d((0,0))=0$, where $E'_d$ is the elliptic curve over $K$ defined by the Weierstrass equation
\begin{align}\label{E:main2 proof}
y^2=x^3-8a(a+b) d \, x^2 +8a(a+b)^3  d^2 \, x.
\end{align}
For each place $v$ of $K$, we have a homomorphism
\[
\delta_{d,v}\colon E'_d(K_v) \to K_v^\times/(K_v^\times)^2 
\]
as in in \S\ref{SS:our curves 1} with kernel $\phi_d(E_d(K_v))$.   For our proof, we will be interested in the case where $d$ is either $1$ or $D$.  Note that $a$ and $b$ are implicit in the notation $E_d$ and $\delta_{d,v}$.

\subsection{The set of primes $S$}

In this section, we construct a finite set $S$ of nonzero prime ideals of $\OO_K$ and a nonempty open subset $U_\p$ of $K_\p^2$ for each $\p\in S$.   For each $\p\in S$, we will also construct subgroups $\Phi_{1,\p}$ and $\Phi_{D,\p}$ of $K_\p^\times/(K_\p^\times)^2$.    The complicated choices in this section are motivated by the explicit $2$-descent computations that will be done later on (see especially Lemmas~\ref{L:Selmer ratio calculation} and \ref{L:first Selmer}).

For each infinite place $v\in S_\infty$, we define $\Phi_{1,v}$ and $\Phi_{D,v}$ to be the trivial subgroup of $K_v^\times/(K_v^\times)^2$.

\begin{lemma} \label{L:set low conditions}
There is a finite set $S_1$ of nonzero prime ideals of $\OO_K$ disjoint from $S_0$ such that for each  $\p\in S_0\cup S_1$ we have a nonempty open subset $U_\p$ of $K_\p^2$ and subgroups $\Phi_{1,\p}$ and $\Phi_{D,\p}$ of $K_\p^\times/(K_\p^\times)^2$ for which the following hold:
\begin{alphenum}
\item \label{L:set low conditions a}
If $(a,b)\in K^2$ lies in $U_\p$ for a prime ideal $\p\in S_0 \cup S_1$, then $ab(a+b)(a-b)\neq 0$ and we have
\[
\operatorname{Im}(\delta_{1,\p})=\Phi_{1,\p} \quad \text{ and } \quad \operatorname{Im}(\delta_{D,\p})=\Phi_{D,\p}.
\] 
Note that $\delta_{1,\p}$ and $\delta_{D,\p}$ depend on $(a,b)$ while the groups $\Phi_{1,\p}$ and $\Phi_{D,\p}$ do not.  

\item\label{L:set low conditions b}
For all $\p\in S_0\cup S_1$ and $(a,b)\in U_\p$, the integers $v_\p(a)$, $v_\p(a+b)$ and $v_\p(a-b)$ do not depend on the choice of $(a,b)$.   

\item\label{L:set low conditions c}
For $\p\in S_0$ and $(a,b)\in U_\p$, we have $v_\p(a)=v_\p(a+b)=v_\p(a-b)=0$.

\item \label{L:set low conditions d}
If $(a,b)\in K^2$ lies in $U_\p$ for all $\p\in S_0$, then $a+b\in 1+\n_0\OO_\p$ for all prime ideals $\p|\n_0$. 

\item \label{L:set low conditions e}
If $(a,b)\in K^2$ lies in $U_\p$ for all $\p\in S_1$, then
\[
\sum_{\p\in S_1,\, \p \text{ inert in $L$}} v_\p(a+b) \equiv 1 \pmod{2}.
\]

\item  \label{L:set low conditions f}
We have $\prod_{\p\in S_0 \cup S_1}  |\Phi_{1,\p}| = \prod_{\p\in S_0  \cup S_1}  |\Phi_{D,\p}|.$

\item \label{L:set low conditions g}
For $d\in \{1,D\}$, the images of the groups
\[
\OO_{K,S_0\cup S_1}^\times,\quad  \prod_{v\in S_0 \cup S_1 \cup S_\infty} \Phi_{d,v} \quad \text{ and }\quad \prod_{v\in S_0\cup S_\infty} \{1\} \times \prod_{v\in S_1} \OO_v^\times
\]
  in $\prod_{v\in S_0 \cup S_1 \cup S_\infty} K_v^\times/(K_v^\times)^2$ generate $\prod_{v\in S_0 \cup S_1 \cup S_\infty} K_v^\times/(K_v^\times)^2$.
\end{alphenum}
\end{lemma}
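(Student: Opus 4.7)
The plan is to construct the data $(U_\p, \Phi_{1,\p}, \Phi_{D,\p})$ in three phases, using Chebotarev density and Lemma~\ref{L:local existence} as local building blocks. First I would fix the data on $S_0$: for each $\p \in S_0$, pick a base point $(a_\p, b_\p) \in (\OO_\p^\times)^2$ with $a_\p \pm b_\p \in \OO_\p^\times$, and if $\p \mid \n_0$ require $a_\p + b_\p \equiv 1 \pmod{\n_0 \OO_\p}$ to anticipate (d). Let $U_\p$ be a small open neighborhood of $(a_\p, b_\p)$ in $K_\p^2$. Because the Weierstrass coefficients of $E_d, E'_d$ depend continuously on $(a,b)$ and $\operatorname{Im}(\delta_{d,\p})$ is a subgroup of the finite group $K_\p^\times/(K_\p^\times)^2$, shrinking $U_\p$ makes this image independent of the point; define $\Phi_{d,\p}$ to be the common value. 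Items (b), (c), (d) then hold by construction, and for $\p \in S_0$ with $\p \nmid 2$ Lemma~\ref{L:Selmer}(\ref{L:Selmer i}) identifies $\Phi_{d,\p} = \OO_\p^\times/(\OO_\p^\times)^2$.

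Second, I would enlarge $S_1$ to enforce (e) and (f). Set $R := \prod_{\p \in S_0} |\Phi_{D,\p}|/|\Phi_{1,\p}|$, a power of $2$. Pick a prime $\p_1 \notin S_0$ inert in $L$ (possible by Chebotarev applied to $L/K$) and force $U_{\p_1}$ to realize $(v_{\p_1}(a), v_{\p_1}(a+b), v_{\p_1}(a-b)) = (0,1,0)$ via Lemma~\ref{L:local existence}(\ref{L:easy i}). Since $D \notin (K_{\p_1}^\times)^2$, Lemma~\ref{L:Selmer}(\ref{L:Selmer iv})--(\ref{L:Selmer v}) gives $|\Phi_{1,\p_1}| = 2$ and $|\Phi_{D,\p_1}| = 4$, doubling the running ratio and contributing the odd summand $v_{\p_1}(a+b) = 1$ toward (e). Then adjoin further primes $\p_2, \dots, \p_{j+1}$ inert in $L$, with $U_{\p_i}$ forcing $m = (0,2,0)$ together with $-2a(a+b)$ either a square (Lemma~\ref{L:local existence}(\ref{L:easy iv})) or a non-square (by varying the unit $u$ in that construction). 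By Lemma~\ref{L:Selmer}(\ref{L:Selmer viii})--(\ref{L:Selmer ix}), the local ratio is $1/2$ or $2$ respectively, while $v_{\p_i}(a+b) = 2$ is even and leaves (e) intact. Selecting $j$ and the cases so the total product ratio equals $1$ secures (f).

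Third, I would adjoin additional primes to secure (g). Via Chebotarev, choose primes $\p$ with $U_\p$ forcing $m = (0,0,1)$ and $a \notin (K_\p^\times)^2$, as in Lemma~\ref{L:local existence}(\ref{L:easy v}). By Lemma~\ref{L:Selmer}(\ref{L:Selmer ii}) we have $\Phi_{1,\p} = \Phi_{D,\p} = \{1\}$, and $v_\p(a+b) = 0$, so (e) and (f) are undisturbed. Let $T := \prod_{v \in S_0 \cup S_1 \cup S_\infty} K_v^\times/(K_v^\times)^2$ and $A := \OO_{K, S_0 \cup S_1}^\times$. Each new prime enlarges $T$ by a factor $K_\p^\times/(K_\p^\times)^2$ and enlarges $A$ by a generator $\pi_\p$ of $\p$ in the enlarged $S$-integer ring (a PID after further enlargement of $S_0$ if necessary). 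Modulo the subgroup $C = \prod_{v \in S_0 \cup S_\infty}\{1\} \times \prod_{v \in S_1} \OO_v^\times$, the image of $\pi_\p$ at $\p$ is the nontrivial uniformizer class, while its images at $v \in S_0 \cup S_\infty$ are determined by $\Frob_\p$ in a finite abelian extension of $K$ and can be prescribed arbitrarily by Chebotarev. Hence for any nonzero character of the finite $\FF_2$-quotient $T/(A + \prod_v \Phi_{d,v} + C)$ we can produce a prime whose new generator is nontrivial against that character; finitely many such additions zero out the quotient for both $d \in \{1, D\}$, giving (g).

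The central difficulty is Phase 3: conditions (e), (f), and (g) are simultaneous global constraints, and the construction hinges on using case (\ref{L:Selmer ii}) of Lemma~\ref{L:Selmer} as a \emph{neutral} vehicle that advances (g) without disturbing (e) or (f). The key arithmetic input for the Chebotarev step is that all square-class data at $S_0 \cup S_\infty$ one needs to prescribe are encoded by a finite abelian extension of $K$. A secondary subtlety in Phase 1 is that at $\p \mid 2$ the curve $E_d$ need not have good reduction, but local constancy of $\operatorname{Im}(\delta_{d,\p})$ on a small ball still follows from the continuous dependence of the $2$-descent on the Weierstrass model combined with the discreteness of $K_\p^\times/(K_\p^\times)^2$.
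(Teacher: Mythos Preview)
Your Phases~1 and~2 are essentially the same as the paper's construction and are fine (the case $m=(0,2,0)$ with $-2a(a+b)$ a non-square is not literally provided by Lemma~\ref{L:local existence}, but is an easy variant). The problem is Phase~3.

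With your choice $m=(0,0,1)$ at a new prime $\p$, Lemma~\ref{L:Selmer}(\ref{L:Selmer ii}) gives $\Phi_{d,\p}=\{1\}$. So at $\p$ the subgroup $\Phi_{d,\p}+\OO_\p^\times/(\OO_\p^\times)^2$ has order $2$ inside $K_\p^\times/(K_\p^\times)^2$, which has order $4$. Write $V:=T/(\prod_v\Phi_{d,v}+C)$ before adjoining $\p$, and let $f\colon A\to V$ be the image of the $S$-units. Adjoining $\p$ replaces $V$ by $V'=V\oplus\FF_2$ (the extra $\FF_2$ is the uniformizer direction at $\p$) and replaces $A$ by $A'=A\oplus\langle\pi_\p\rangle$, with $\pi_\p\mapsto(x,1)$ for some $x\in V$. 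A one-line computation shows $\dim_{\FF_2}V'/\operatorname{im}(f')=\dim_{\FF_2}V/\operatorname{im}(f)$ regardless of $x$: any character $\chi$ of $V/\operatorname{im}(f)$ with $\chi(x)\neq 0$ extends to a character of $V'/\operatorname{im}(f')$ by declaring it nontrivial on the new $\FF_2$. Thus your ``kill one character at a time'' procedure never shrinks the quotient, and (g) is not achieved.

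The paper's fix is to use $m=(1,0,0)$ instead. By Lemma~\ref{L:Selmer}(\ref{L:Selmer iii}), $|\Phi_{d,\p}|=2$ and $\Phi_{d,\p}\neq\OO_\p^\times/(\OO_\p^\times)^2$; since $\delta_{d,\p}((0,0))=2a(a+b)\cdot(K_\p^\times)^2$ has odd valuation, $\Phi_{d,\p}$ contains a uniformizer class. Hence $\Phi_{d,\p}+\OO_\p^\times/(\OO_\p^\times)^2=K_\p^\times/(K_\p^\times)^2$, so the target $T/(\prod_v\Phi_{d,v}+C)$ does \emph{not} grow at $\p$, while $A$ gains the generator $\pi_\p$. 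Now the quotient genuinely shrinks. Concretely, one first enlarges to $S_1'$ (by weak approximation) so that $\OO_{K,S_0\cup S_1\cup S_1'}^\times$ already surjects onto $\prod_{v\in S_0\cup S_1\cup S_\infty}K_v^\times/(K_v^\times)^2$; the $m=(1,0,0)$ choice at each $\p\in S_1'$ then absorbs the new local factors via $\Phi_{d,\p}+C$. Since $v_\p(a+b)=0$ and $|\Phi_{1,\p}|=|\Phi_{D,\p}|=2$ for $m=(1,0,0)$, conditions (e) and (f) are undisturbed, exactly as you wanted from your ``neutral vehicle''.
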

\begin{proof}
Consider a nonzero prime ideal $\p$ of $\OO_K$ and a choice of $a_\p,b_\p \in K_v$ with $a_\p b_\p(a_\p+b_\p)(a_\p-b_\p)\neq 0$.   For a fixed real number $\epsilon_\p>0$, we define the open subset 
\[
U_\p:=\{(a,b)\in K_\p^2:  |a-a_\p|_\p<\epsilon_\p,\, |b-b_\p|_\p<\epsilon_\p\}
\] 
of $K_\p^2$.   By taking $\epsilon_\p>0$ sufficiently small, we may assume that $ab(a-b)(a+b)\neq 0$ for all $(a,b)\in U_\p$.  We may also take $\epsilon_\p>0$ sufficiently small so that the $\p$-adic valuations of $a$, $a+b$ and $a-b$ do not depend on the choice of $(a,b)\in U_\p$.  For each $d\in \{1,D\}$ and $(a,b) \in U_\p$, we obtain an elliptic curve $E_{d,(a,b)}'$ over $K_\p$ defined by the equation (\ref{E:main2 proof}).  Following the cohomological construction of \S\ref{SS:2-descent}, we obtain a homomorphism
\begin{align} \label{E:Kv connecting map}
E_{d,(a,b)}'(K_\p) \to K_\p^\times/(K_\p^\times)^2
\end{align}
that takes $(0,0)$ to $2a(a+b)\cdot (K_\p^\times)^2$ and takes any point $(x,y) \notin \{0,(0,0)\}$ to $x\cdot (K_\p^\times)^2$.  The key topological observation is the image of (\ref{E:Kv connecting map}) does not depend on the choice $(a,b)$ if we take $\epsilon_\p>0$ sufficiently small.   Thus by taking $\epsilon_\p>0$ sufficiently small, we may assume that (\ref{E:Kv connecting map}) has a common image $\Phi_{d,\p}$ for all $(a,b)\in U_\p$.   For the rest of the proof if we specify a prime ideal $\p$ and a pair $(a_\p,b_\p)$ with $a_\p b_\p(a_\p+b_\p)(a_\p-b_\p)\neq 0$, then we will have $U_\p$, $\Phi_{1,\p}$ and $\Phi_{D,\p}$ with properties as above.   Parts (\ref{L:set low conditions a}) and (\ref{L:set low conditions b}) will be immediate once we have defined the set $S_1$ and chosen $a_\p,b_\p\in K_\p$ for all $\p\in S_0\cup S_1$.

For $\p\in S_0$, we take $a_\p=1$ and choose $b_\p$ to be a nonzero element of $\n_0\p\OO_\p$; in particular, $v_\p(a_\p)=v_\p(a_\p+b_\p)=v_\p(a_\p-b_\p)=0$.   Take any $(a,b)\in K^2$ that lies in $U_\p$ for all $\p\in S_0$.  For each $\p|\n_0$, we have $\p\in S_0$ and hence $a+b \in 1+ \n_0 \OO_\p$.  We thus have (\ref{L:set low conditions c}) and (\ref{L:set low conditions d}).
  
Choose a finite set $S_1$ of nonzero prime ideals $\p\notin S_0$ of $\OO_K$ that are inert in $L=K(\sqrt{D})$ along with values $\xi_\p\in \{1/2,2\}$ such that 
\begin{align} \label{E:xi defn}
\prod_{\p \in S_0} |\Phi_{1,\p}|/|\Phi_{D,\p}| \cdot \prod_{\p \in S_1} \xi_\p =1
\end{align}
holds and $\xi_\p=2$ for some $\p \in S_1$.  Note that $D$ is not a square in $K_\p$ for $\p\in S_1$ since these primes are inert in $L$.  For $\p\in S_1$, we will now choose a pair $(a_\p,b_\p)\in K_\p^2$ using Lemma~\ref{L:local existence}. For each $\p \in S_1$ with $\xi_\p=1/2$, we define $\mu_\p:=1$ and choose $a_\p,b_\p\in K_\p^2$ with $a_\p b_\p(a_\p+b_\p)(a_\p-b_\p)\neq 0$ and
\[
(v_\p(a_\p),v_\p(a_\p+b_\p),v_\p(a_\p-b_\p))=(0,1,0).
\]    
For each $\p\in S_1$ with $\xi_\p=2$, we choose a value $\mu_\p \in \{1,2\}$.   For $\p\in S_1$ with $\xi_\p=2$ and $\mu_\p=1$, we choose $a_\p,b_\p\in K_\p$ with $a_\p b_\p(a_\p+b_\p)(a_\p-b_\p)\neq 0$,
\[
(v_\p(a_\p),v_\p(a_\p+b_\p),v_\p(a_\p-b_\p))=(-1,1,-1)
\] 
and $-2a_\p(a_\p+b_\p)$ a square in $K_\p$.  For $\p\in S_1$ with $\xi_\p=2$ and $\mu_\p=2$, we choose $a_\p,b_\p\in K_\p$ with $a_\p b_\p(a_\p+b_\p)(a_\p-b_\p)\neq 0$, 
\[
(v_\p(a_\p),v_\p(a_\p+b_\p),v_\p(a_\p-b_\p))=(0,2,0)
\]
and $-2a_\p(a_\p+b_\p)$ a square in $K_\p$.   For each $\p\in S_1$ and any $(a,b)\in U_\p \cap K^2$, we can use Lemma~\ref{L:Selmer}(\ref{L:Selmer iv}--\ref{L:Selmer ix}) with the above $\p$-adic valuations and $D\notin (K_\p^\times)^2$ to compute $|\operatorname{Im}(\delta_{1,\p})|/|\operatorname{Im}(\delta_{D,\p})|$ and show that $|\operatorname{Im}(\delta_{1,\p})|/|\operatorname{Im}(\delta_{D,\p})| = \xi_\p$.   Therefore, $|\Phi_{1,\p}|/|\Phi_{D,\p}|=\xi_\p$ for all $\p\in S_1$.  So equation (\ref{E:xi defn}) becomes
\[
\prod_{\p \in S_0\cup S_1} |\Phi_{1,\p}|= \prod_{\p \in S_0\cup S_1} |\Phi_{D,\p}|.
\]
If $(a,b)\in K^2$ lies in $U_\p$ for all $\p\in S_1$, then
\[
\sum_{\p\in S_1,\, \p \text{ inert in $L$}} v_\p(a+b)= \sum_{\p\in S_1} v_\p(a_\p+b_\p) = \sum_{\p\in S_1} \mu_\p.
\]
Since we have the freedom to choose $\mu_\p\in \{1,2\}$ for $\p\in S_1$ with $\xi_\p=2$, we may assume that the $\mu_\p$ have been chosen so that
\begin{align}
\sum_{\p\in S_1,\, \p \text{ inert in $L$}} v_\p(a+b) \equiv 1 \pmod{2}
\end{align}
holds for all $(a,b)\in K^2$ that lie in $U_\p$ for all $\p\in S_1$.    With our set $S_1$, we have now proved (\ref{L:set low conditions e}) and (\ref{L:set low conditions f}).

By weak approximation for $K$, there is a finite set $S_1'$ of nonzero prime ideals of $\OO_K$ that is disjoint with $S_0\cup S_1$ such that the homomorphism
\begin{align} \label{E:S1' map}
\OO_{K,S_0\cup S_1\cup S_1'}^\times \to \prod_{v\in S_0 \cup S_1 \cup S_\infty} K_v^\times/(K_v^\times)^2
\end{align}
is surjective. For each $\p \in S_1'$, we choose $a_\p,b_\p \in K_\p$ with $a_\p b_\p (a_\p+b_\p)(a_\p-b_v\p)\neq 0$ such that $(v_\p(a_\p),v_\p(a_\p+b_\p),v_\p(a_\p-b_\p))=(1,0,0)$.    We then have $U_\p$, $\Phi_{1,\p}$ and $\Phi_{D,\p}$ as above.   For each $\p\in S_1'$, Lemma~\ref{L:Selmer}(\ref{L:Selmer iii}) implies that $\Phi_{1,\p}$ and $\Phi_{D,\p}$ are subgroups of $K_\p^\times/(K_\p^\times)^2$ of order $2$ that are not equal to $\OO_\p^\times/(\OO_\p^\times)^2$.   So for $d\in \{1,D\}$, the subgroup of $\prod_{v\in S_0\cup S_1\cup S_1' \cup S_\infty} K_v^\times$ generated by the groups
\begin{align} \label{E:S1' gen}
\prod_{v\in S_0\cup S_1 \cup S_1' \cup S_\infty} \Phi_{d,v} \quad \text{ and }\quad \prod_{v\in S_0\cup S_1\cup S_\infty} \{1\} \times \prod_{v\in S_1'} \OO_v^\times
\end{align}
 contains $\prod_{v\in S_0\cup S_1\cup S_\infty} \{1\} \times \prod_{v\in S_1'} K_v^\times$.    Since the homomorphism (\ref{E:S1' map}) is surjective, we deduce that $\prod_{v\in S_0\cup S_1\cup S_1' \cup S_\infty} K_v^\times$ is generated by the image of the groups (\ref{E:S1' gen}) and the image of $\OO_{K,S_0\cup S_1\cup S_1'}^\times$.  Therefore, (\ref{L:set low conditions g}) holds when we replace $S_1$ by $S_1\cup S_1'$.   Observe that (\ref{L:set low conditions e}) and (\ref{L:set low conditions f}) remain true after replacing $S_1$ by $S_1\cup S_1'$ since we have $v_\p(a_\p+b_\p)=0$ and $|\Phi_{1,\p}|=2=|\Phi_{D,\p}|$ for all $\p\in S_1'$.
\end{proof}
 
For the rest of the proof, we fix $S_1$, the sets $U_\p$, and the groups $\Phi_{1,\p}$ and $\Phi_{D,\p}$ as in Lemma~\ref{L:set low conditions}.

\begin{lemma} \label{L:last generators}
There is a finite set $S_2$ of nonzero prime ideals of $\OO_K$ that is disjoint from $S_0\cup S_1$ such that the following hold:
\begin{itemize}
\item
all the prime ideals in $S_2$ split in $L$,
\item
the natural homomorphism
\begin{align} \label{E:early gammad}
\OO_{K,S_0\cup S_1\cup S_2}^\times \to \prod_{v\in S_0\cup S_1\cup S_\infty} \big(K_{v}^\times/(K_{v}^\times)^2\big)/\Phi_{d,v}
\end{align}
is surjective for each $d\in \{1,D\}$,
\item
$S_2$ contains a prime ideal $\pi_1 \OO_K$ that splits in $L$ such that $\pi_1\in \OO_K$ is a square in $K_v$ for all places $v\in S_0\cup S_1\cup S_\infty$.  
\end{itemize}
\end{lemma}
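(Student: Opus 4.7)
The plan is to construct $S_2$ in two steps using the Chebotarev density theorem applied to suitable ray class fields of $K$. Throughout we exploit the fact that $\OO_{K,S_0\cup S_1}$ is a PID: for any finite set $S_2$ of primes disjoint from $S_0\cup S_1$ we have $\OO_{K,S_0\cup S_1\cup S_2}^\times = U\cdot\langle \pi_\q : \q\in S_2\rangle$, where $U:=\OO_{K,S_0\cup S_1}^\times$ and $\pi_\q\in\OO_{K,S_0\cup S_1}$ is a generator of $\q\cdot\OO_{K,S_0\cup S_1}$, unique up to $U$.

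To satisfy the third bullet, set $\mathfrak{m}:=\prod_{\p\in S_0\cup S_1}\p^{n_\p}\cdot\prod_{v\text{ real}}v$, with each $n_\p$ chosen large enough that $1+\p^{n_\p}\OO_\p\subseteq (K_\p^\times)^2$ (any $n_\p>2v_\p(2)$ works for $\p\mid 2$, and $n_\p=1$ suffices for $\p\nmid 2$); let $H$ be the ray class field of $K$ of conductor $\mathfrak{m}$. A prime $\q\notin S_0\cup S_1$ splits completely in $H/K$ if and only if $\q=(\pi)$ is principal in $\OO_K$ with $\pi\equiv 1\pmod{\mathfrak{m}}$ and $\pi$ totally positive, in which case $\pi\in(K_v^\times)^2$ for every $v\in S_0\cup S_1\cup S_\infty$. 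Because $L/K$ is unramified outside $S_0\cup S_\infty$ (as $v_\p(D)=0$ for $\p\notin S_0$ and $\{\p\mid 2\}\subseteq S_0$) and its conductor divides $\mathfrak{m}$, we have $L\subseteq H$, so splitting in $H$ automatically forces splitting in $L$. The Chebotarev density theorem then furnishes infinitely many such primes; pick one and call it $\pi_1\OO_K$.

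For the second bullet, consider for $d\in\{1,D\}$ the finite $\FF_2$-vector space
\[
\bar G_d:=\operatorname{coker}\Bigl(U \to \prod_{v\in S_0\cup S_1\cup S_\infty}(K_v^\times/(K_v^\times)^2)/\Phi_{d,v}\Bigr),
\]
which by Lemma~\ref{L:set low conditions}(g) is spanned by the images of classes of the form $(1,\dots,1,u_\p,1,\dots,1)$ with $u_\p\in\OO_\p^\times$ and $\p\in S_1$. For each such target class I seek a prime $\q\notin S_0\cup S_1$ splitting in $L$ whose generator $\pi_\q$ hits the target simultaneously in $\bar G_1$ and $\bar G_D$. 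Lifting the target via weak approximation to $\alpha\in K^\times$, the condition on $\q$ reduces to a Frobenius condition in a finite abelian extension $M/K$ built from the local square-class data at $S_0\cup S_1\cup S_\infty$ modulo $\Phi_{1,v}$ and $\Phi_{D,v}$, together with the class group of $\OO_K$. The composite $M\cdot L/K$ remains abelian, and the joint condition that $\q$ splits in $L$ and has the prescribed image becomes a single coset condition in $\Gal(M\cdot L/K)$. Chebotarev then produces infinitely many such $\q$, provided the target coset is non-empty. Collecting one such $\q$ for each basis element of $\bar G_1\oplus\bar G_D$ and appending $\pi_1\OO_K$ from the previous step completes the construction of $S_2$.

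The main obstacle is verifying non-emptiness of these Chebotarev cosets, i.e., ruling out that $L\subseteq M$ in a way that forces the $L$-splitting condition to obstruct certain target classes. Concretely, one needs the $L/K$ character on the idele class group not to be a non-trivial $\FF_2$-linear combination of the characters detecting the target classes in $\bar G_1\oplus\bar G_D$. By the global quadratic reciprocity law this compatibility reduces to a parity identity on the local valuation data at $S_1$. This is precisely the content of Lemma~\ref{L:set low conditions}(e), and the flexibility built into the construction of $S_1$ (the freedom to tune $\mu_\p\in\{1,2\}$ at primes with $\xi_\p=2$, and the inclusion of at least one such prime) is exactly what guarantees the identity can be arranged simultaneously for every target class; should it ever fail for a specific target, one would simply enlarge $S_1$ by adding further auxiliary split primes to absorb the obstruction.
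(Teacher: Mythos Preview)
Your handling of the third bullet (finding $\pi_1$) is fine and essentially matches the paper. The problem is in the second bullet, where there is a real gap.

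You correctly spot the obstruction: when you ask for primes $\q$ that \emph{split in $L$} and whose generator $\pi_\q$ hits a prescribed class in $\bar G_1\oplus\bar G_D$, the splitting condition $\chi_L(\Frob_\q)=1$ may be $\FF_2$-dependent on the square-class characters cutting out the target, so some cosets in $\Gal(M\!\cdot\!L/K)$ may be empty. But your proposed resolution does not work. Lemma~\ref{L:set low conditions}(\ref{L:set low conditions e}) is the parity condition $\sum_{\p\in S_1\text{ inert}} v_\p(a+b)\equiv 1\pmod 2$ for pairs $(a,b)\in\prod_\p U_\p$; it is used later (Lemma~\ref{L:q2 is inert}) to force $\q_2$ inert in $L$ and has no bearing on the Chebotarev compatibility you need here. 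The freedom in the $\mu_\p$ was already consumed to arrange~(\ref{L:set low conditions e}), and you cannot ``enlarge $S_1$'' at this stage: $S_1$ is fixed, together with all the $U_\p$, $\Phi_{d,\p}$, and conditions (\ref{L:set low conditions a})--(\ref{L:set low conditions g}) of Lemma~\ref{L:set low conditions}.

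The paper does not try to kill the obstruction; it works around it. One chooses $S_2$ to consist of split primes whose classes generate $G:=\ker\psi'\cap f(K^{\m}/K^{\m,1})$ in the ray class group $\Cl_K^{\m}$, accepting that $[f(K^{\m}/K^{\m,1}):G]\leq 2$. The map
\[
\OO_{K,S_2}^\times \;\longrightarrow\; \prod_{\p\in S_0\cup S_1}\OO_\p^\times/(\OO_\p^\times)^2 \times \prod_{v\in S_\infty} K_v^\times/(K_v^\times)^2
\]
then has cokernel of order at most $2$. The key point is that the conductor $\n_0$ of $L/K$ has all its prime divisors in $S_0$, so the obstructing character factors through the $S_0\cup S_\infty$ coordinates; consequently the image already contains $\{1\}_{S_0\cup S_\infty}\times\prod_{\p\in S_1}\OO_\p^\times/(\OO_\p^\times)^2$. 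This is exactly what Lemma~\ref{L:set low conditions}(\ref{L:set low conditions g}) was set up to exploit: combined with $\OO_{K,S_0\cup S_1}^\times$ and the $\Phi_{d,v}$, it forces surjectivity of~(\ref{E:early gammad}) for both $d$. In short, the relevant input from Lemma~\ref{L:set low conditions} is part~(\ref{L:set low conditions g}), not~(\ref{L:set low conditions e}).
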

\begin{proof}
Let $\m_0$ be a nonzero ideal of $\OO_K$ whose prime divisors are the prime ideals $\p\in S_0 \cup S_1$.   We may assume that $\m_0$ is divisible by $\n_0$.   Let $\m$ be the modulus obtained from $\m_0$ and all the real places of $K$.

For each fractional ideal $I$ of $K$ we have $I=\prod_{\p} \p^{v_\p(I)}$, where the product is over the nonzero prime prime ideals of $\OO_K$ and the $v_\p(I)$ are unique integers that are zero for all but finitely many $\p$.  Let $\calI_K^{\m}$ be the group of fractional ideals $I$ of $K$ with $v_\p(I)=0$ for all $\p|\m_0$.  Define $K^{\m}$ to be the group of $\alpha\in K^\times$ with $v_\p(\alpha)=0$ for all $\p|\m_0$.  Let $K^{\m,1}$ be the subgroup of $K^{\m}$ consisting of $\alpha\in K^{\m}$ with $v_\p(\alpha-1) \geq v_\p(\m_0)$ for all $\p| \m_0$ and with $\alpha$ positive in $K_v$ for all real places $v$ of $K$.  Let $\calP_{K}^{\m}$ be the subgroup of $\calI_K^{\m}$ consisting of $\alpha \OO_K$ with $\alpha\in K^{\m,1}$.   Recall that the \emph{ray class group} of $\m$ is the (finite) group $\Cl_K^{\m}:=\calI_K^{\m}/\calP_{K}^{\m}$.  We have a natural exact sequence
\begin{align} \label{E:ray class group exact sequence}
\OO_K^\times \to K^{\m}/K^{\m,1} \xrightarrow{f} \Cl_K^{\m} \to \Cl_K \to 1,
\end{align}
where $\Cl_K$ is the class group of $K$.   Assuming that the powers of the prime divisors of $\m_0$ were chosen large enough, we may assume that we have a well-defined homomorphism
\begin{align} \label{E:Km map}
K^{\m}/K^{\m,1} \to \prod_{\p\in S_0 \cup S_1} \OO_\p^\times/(\OO_\p^\times)^2 \times \prod_{v\in  S_\infty} K_v^\times/(K_v^\times)^2;
\end{align}
this map is surjective by weak approximation for $K$.

Let $F$ be the corresponding \emph{ray class field} of $\m$; it is a finite abelian extension of $K$ with an isomorphism $\Gal(F/K)\xrightarrow{\sim} \Cl_K^{\m}$ so that $F$ is unramified at each nonzero prime ideal $\p\nmid \m_0$ of $\OO_K$ and $\Frob_\p \in \Gal(F/K)$ is sent to the class $[\p]\in \Cl_K^{\m}$.

Let $\psi'\colon \Cl_K^{\m}\to \{\pm 1\}$ be the homomorphism obtained by composing the quotient map $\Cl_K^{\m}\to \Cl_K^{\n}$ with $\psi$.   We have $L\subseteq F$ and for a nonzero prime ideal $\p\notin S_0\cup S_1$, $\p$ splits in $L$ if and only if $\psi'([\p])=1$.    Let $G\subseteq \Cl_K^{\m}$ be the intersection of $\ker (\psi')$ with $f(K^{\m}/K^{\m,1})$.  We have $[f(K^{\m}/K^{\m,1}):G]\leq 2$.    By the Chebotarev density theorem, we can find a finite set $S_2$ of nonzero prime ideals of $\OO_K$ that is disjoint from $S_0\cup S_1$ such that the set $\{[\p]: \p \in S_2\} \subseteq \Cl_{K}^{\m}$ generates the group $G$.  Note that all the prime ideals $\p \in S_2$ are principal and split in $L$.   

By our choice of $S_2$, the exact sequence (\ref{E:ray class group exact sequence}) and the surjectivity of (\ref{E:Km map}), we find that the quotient homomorphism 
\begin{align} \label{E:OKS1 unit map}
\OO_{K,S_2}^\times \to K^{\m}/K^{\m,1} \to \prod_{\p\in S_0 \cup S_1} \OO_\p^\times/(\OO_\p^\times)^2 \times \prod_{v\in  S_\infty} K_v^\times/(K_v^\times)^2
\end{align}
has cokernel of cardinality at most $2$.  If (\ref{E:OKS1 unit map}) is not surjective, then since $\n_0$ has only prime divisors in $S_0$ we find that composing (\ref{E:OKS1 unit map}) with the quotient map to $\prod_{\p\in S_0} \OO_\p^\times/(\OO_\p^\times)^2 \times \prod_{v\in  S_\infty} K_v^\times/(K_v^\times)^2$ gives another homomorphism that is not surjective.  Therefore, the image of (\ref{E:OKS1 unit map}) must contain $\prod_{v\in S_0 \cup S_\infty} \{1\} \times \prod_{\p\in S_1} \OO_\p^\times/(\OO_\p^\times)^2$.    By Lemma~\ref{L:set low conditions}(\ref{L:set low conditions g}), we deduce that (\ref{E:early gammad}) is surjective for each $d\in \{1,D\}$.

By the Chebotarev density theorem, there is a nonzero prime ideal $\p\notin S_0\cup S_1\cup S_2$ of $\OO_K$ that splits completely in $F$ and hence $[\p]=1$ in $\Cl_K^{\m}$.  The prime $\p$ splits in $L$ since $L\subseteq F$.  We have $\p=\pi_1 \OO_K$ for some $\pi_1 \in \OO_K$.  By the exact sequence (\ref{E:ray class group exact sequence}), there is a unit $u \in \OO_K^\times$ for which $u \cdot (K^{\m,1})=\pi_1\cdot (K^{\m,1})$.  After replacing $\pi_1$ by $u^{-1}\pi_1$, we may assume that $\pi_1 \in K^{\m,1}$ and hence $\pi_1$ is a square in $K_v$ for all $v\in S_0\cup S_1 \cup S_\infty$ by our choice of $\m$.   The lemma follows by adding $\p=\pi_1\OO_K$ to $S_2$.
\end{proof}

From now on, we fix a set $S_2$ and a prime $\pi_1\in \OO_K$ as in Lemma~\ref{L:last generators}.  Take any $d\in \{1,D\}$.   We have a natural group homomorphism
\[
\gamma_d \colon \OO_{K,S_0\cup S_1\cup S_2}^\times/(\OO_{K,S_0\cup S_1\cup S_2}^\times)^2 \to \prod_{v\in S_0\cup S_1\cup S_\infty} \big(K_{v}^\times/(K_{v}^\times)^2\big)/\Phi_{d,v}
\]
that is surjective by our choice of $S_2$.  Define the group $V_d :=\ker \gamma_d$.

\begin{lemma} \label{L:prelim Selmer product}
\begin{romanenum}
\item \label{L:prelim Selmer product i}
For each $d\in \{1,D\}$, we have $|V_d| = 2^{|S_2|}\cdot \prod_{v\in S_0 \cup S_1\cup S_\infty} \tfrac{1}{2} |\Phi_{d,v}|$.  
\item \label{L:prelim Selmer product ii}
We have $|V_1|=|V_D|$.
\end{romanenum}
\end{lemma}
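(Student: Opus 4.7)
The plan is to compute $|V_d|$ directly using that $\gamma_d$ is surjective by Lemma~\ref{L:last generators}. Since $V_d = \ker\gamma_d$ and the target of $\gamma_d$ is the product of the quotients $(K_v^\times/(K_v^\times)^2)/\Phi_{d,v}$, we have
\[
|V_d| = |\OO_{K,S_0\cup S_1\cup S_2}^\times/(\OO_{K,S_0\cup S_1\cup S_2}^\times)^2| \cdot \prod_{v\in S_0\cup S_1\cup S_\infty} \frac{|\Phi_{d,v}|}{|K_v^\times/(K_v^\times)^2|}.
\]
The task thus reduces to computing the source cardinality and the local factors $|K_v^\times/(K_v^\times)^2|$. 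Part (ii) will then follow from part (i) together with Lemma~\ref{L:set low conditions}(\ref{L:set low conditions f}).

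For the source, I would invoke the Dirichlet $S$-unit theorem with $S=S_0\cup S_1\cup S_2$: we have $\OO_{K,S}^\times \cong \mu(K)\times \ZZ^{r_1+r_2-1+|S|}$, and since $-1\in\mu(K)$ the torsion group $\mu(K)$ is cyclic of even order, so modding out by squares gives a group of order $2^{r_1+r_2+|S_0|+|S_1|+|S_2|}$. For the local factors I would use the standard formula $|K_\p^\times/(K_\p^\times)^2| = 4/|2|_\p$ for a nonzero prime ideal $\p$ of $\OO_K$. Because all primes above $2$ lie in $S_0$ and none in $S_1$, this yields
\[
\prod_{\p\in S_0\cup S_1} |K_\p^\times/(K_\p^\times)^2| = 4^{|S_0|+|S_1|}\prod_{\p\mid 2}(N\p)^{v_\p(2)} = 4^{|S_0|+|S_1|}\cdot 2^{[K:\QQ]},
\]
while $\prod_{v\in S_\infty} |K_v^\times/(K_v^\times)^2| = 2^{r_1}$. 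Using $[K:\QQ]=r_1+2r_2$, the combined denominator equals $2^{2(r_1+r_2+|S_0|+|S_1|)}$.

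Combining the two cardinalities and using $|S_0\cup S_1\cup S_\infty| = |S_0|+|S_1|+r_1+r_2$ yields
\[
|V_d| = 2^{|S_2|-(r_1+r_2+|S_0|+|S_1|)}\prod_{v\in S_0\cup S_1\cup S_\infty}|\Phi_{d,v}| = 2^{|S_2|}\prod_{v\in S_0\cup S_1\cup S_\infty}\tfrac{1}{2}|\Phi_{d,v}|,
\]
which is (i). For (ii), part (i) reduces the claim to $\prod_{v\in S_0\cup S_1\cup S_\infty}|\Phi_{1,v}| = \prod_{v\in S_0\cup S_1\cup S_\infty}|\Phi_{D,v}|$. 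At infinite places both groups are trivial by their definition in the paragraph preceding Lemma~\ref{L:set low conditions}, so the equality reduces to its finite-place content, which is exactly Lemma~\ref{L:set low conditions}(\ref{L:set low conditions f}).

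The only step involving any real content (rather than pure bookkeeping) is the local computation at primes above $2$, where the wild ramification factor $|2|_\p^{-1}$ in the formula for $|K_\p^\times/(K_\p^\times)^2|$ contributes a total of $\prod_{\p\mid 2}(N\p)^{v_\p(2)}=2^{[K:\QQ]}$, which is precisely what is needed for the powers of $2$ to cancel cleanly against $r_1+2r_2$. I do not expect this to be the main obstacle; all of the delicate work has already been absorbed into the construction of $S_0$, $S_1$, $S_2$ and the $\Phi_{d,v}$ in the preceding lemmas.
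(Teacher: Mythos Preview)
Your proof is correct and follows essentially the same approach as the paper: both use the surjectivity of $\gamma_d$ to write $|V_d|$ as the quotient of $|\OO_{K,S_0\cup S_1\cup S_2}^\times/(\OO_{K,S_0\cup S_1\cup S_2}^\times)^2|$ by $\prod_v |K_v^\times/(K_v^\times)^2|/|\Phi_{d,v}|$, compute the source via the $S$-unit theorem, compute the local factors (you cite the standard formula $|K_\p^\times/(K_\p^\times)^2|=4/|2|_\p$ while the paper derives the $\p\mid 2$ case from the unit structure $\OO_\p^\times\cong C_\p\times\ZZ_2^{e_\p f_\p}$), and then deduce (ii) from (i) together with Lemma~\ref{L:set low conditions}(\ref{L:set low conditions f}) and the triviality of $\Phi_{d,v}$ at infinite places.
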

\begin{proof}
Let $r$ be the number of real embeddings of $K$ and let $s$ be the number of pairs of conjugate complex embeddings of $K$.  From the definition of $V_d$ and the surjectivity of $\gamma_d$, we have
\begin{align} \label{E:2t}
|V_d|=|\OO_{K,S_0\cup S_1 \cup S_2}^\times/(\OO_{K,S_0 \cup S_1 \cup S_2}^\times)^2|\cdot \prod_{v\in S_0\cup S_1\cup S_\infty}\big|\Phi_{d,v}\big|  \cdot \prod_{v\in S_0\cup S_1\cup S_\infty} \big|K_v^\times/(K_v^\times)^2\big|^{-1}.
\end{align}

Consider any place $v|2$ of $K$.   Let $e_v$ and $f_v$ be the ramification index and inertia degree, respectively, of the extension $K_v/\QQ_2$.  We have $[K_v:\QQ_2]=e_vf_v$.  We have $\OO_v^\times\cong C_v \times \ZZ_2^{e_vf_v}$, where $C_v$ is a finite cyclic group, cf.~\cite[II 5.7]{Neukirch}.  The group $C_v$ has even cardinality since it contains $-1$ and hence $\OO_v^\times/(\OO_v^\times)^2\cong (\ZZ/2\ZZ)^{1+e_vf_v}$.   Therefore, $|K_v^\times/(K_v^\times)^2| = 2^{2+e_v f_v}$.  Since $\sum_{v|2} e_v f_v = [K:\QQ]=r+2s$, we deduce that $\prod_{v|2} |K_v^\times/(K_v^\times)^2| = 2^{r+2s} \prod_{v|2} 4$.   We also have $\prod_{v\in S_\infty} |K_v^\times/(K_v^\times)^2|=2^r$.  For any place $v\in S_0\cup S_1$ with $v\nmid 2$, we have $|K_v^\times/(K_v^\times)^2\big|=4$.  Therefore, 
\[
\prod_{v\in S_0\cup S_1\cup S_\infty} | K_v^\times/(K_v^\times)^2| = 2^{2r+2s} 4^{|S_0|} 4^{|S_1|}.
\]

Let $t$ be the rank of the finite generated abelian group $\OO_{K,S_0\cup S_1\cup S_2}^\times$.   By Dirichlet's unit theorem, we have $t=r+s-1+|S_0|+|S_1|+|S_2|$. Since the torsion subgroup of $\OO_{K,S_0\cup S_1\cup S_2}^\times$ is a finite cyclic group of even cardinality, we find that $\OO_{K,S_0\cup S_1\cup S_2}^\times/(\OO_{K,S_0\cup S_1\cup S_2}^\times)^2$ has dimension $t+1$ over $\FF_2$.    Therefore, 
\[
|\OO_{K,S_0\cup S_1\cup S_2}^\times/(\OO_{K,S_0\cup S_1\cup S_2}^\times)^2| = 2^{t+1}=2^{r+s+|S_0|+|S_1|+|S_2|}. 
\]
From (\ref{E:2t}) and the above computations, we have
\[
|V_d|= 2^{r+s+|S_0|+|S_1|+|S_2|}  \cdot \prod_{v \in S_0\cup S_1\cup S_\infty}|\Phi_{d,v}| \cdot 2^{-2r-2s} 2^{-2|S_0|}2^{-2|S_1|}=2^{|S_2|} \prod_{v\in S_0\cup S_1\cup S_\infty}\tfrac{1}{2}|\Phi_{d,v}|.
\]
This completes the proof of (\ref{L:prelim Selmer product i}).  Since $\Phi_{1,v}=\Phi_{D,v}=1$ for all $v\in S_\infty$, part (\ref{L:prelim Selmer product ii}) follows from (\ref{L:prelim Selmer product i}) and Lemma~\ref{L:set low conditions}(\ref{L:set low conditions f}).
\end{proof}

 Note that $\pi_1\cdot (\OO_{K,S_0\cup S_1\cup S_2}^\times)^2$ lies in $V_1$ and $V_D$ since $\pi_1\OO_K \in S_2$ and $\pi_1$ is a  square in $K_v$ for all $v\in S_0\cup S_1\cup S_\infty$.  Thus there is an integer $n\geq 0$ such that $|V_1|=2^{n+1}$.  By Lemma~\ref{L:prelim Selmer product}(\ref{L:prelim Selmer product ii}), we have $|V_D|=2^{n+1}$.  
 
 We can view $V_1$ and $V_2$ as subgroups of $K^\times/(K^\times)^2$.  For each nonzero prime ideal $\p$ of $\OO_K$, let 
\[
\varphi_{\p}\colon K^\times/(K^\times)^2\to K_{\p}^\times/(K_{\p}^\times)^2
\] 
be the natural quotient map.  

\begin{lemma} \label{L:S3}
There are distinct nonzero prime ideals $\p_1,\ldots, \p_n\notin S_0\cup S_1 \cup S_2$ of $\OO_K$ such that 
$V_1 \cap \bigcap_{i=1}^{n}\ker \varphi_{\p_i}$ and $V_D \cap \bigcap_{i=1}^{n}\ker \varphi_{\p_i}$ are both equal to the group generated by $\pi_1\cdot  (K^\times)^2$.
\end{lemma}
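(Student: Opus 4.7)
My plan is to construct the primes $\p_1,\ldots,\p_n$ one at a time by a greedy procedure driven by the Chebotarev density theorem. Set $\bar V_d := V_d / \langle \pi_1\cdot (K^\times)^2 \rangle$, an $\FF_2$-vector space of dimension $n$ for each $d\in\{1,D\}$. For a prime $\p$ that splits in $K(\sqrt{\pi_1})$, the element $\pi_1$ is a square in $K_\p$, so the restriction of $\varphi_\p$ to $V_d$ factors through a well-defined homomorphism $\bar\varphi_\p\colon \bar V_d \to K_\p^\times/(K_\p^\times)^2$. The conclusion of the lemma is then equivalent to the requirement that $\bar V_d \cap \bigcap_{i=1}^n \ker\bar\varphi_{\p_i}=0$ for both $d$. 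Writing $W_d^{(i)} := \bar V_d\cap\bigcap_{j\leq i}\ker\bar\varphi_{\p_j}$, and noting that $\dim W_d^{(0)}=n$, it suffices to maintain the inductive invariant $\dim W_d^{(i)}\leq \max(0,\dim W_d^{(i-1)}-1)$, so that $W_d^{(n)}=0$ after $n$ steps.

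For the inductive step, suppose $\p_1,\ldots,\p_{i-1}$ have been chosen and, for each $d\in\{1,D\}$ with $W_d^{(i-1)}\neq 0$, pick a lift $v_d\in K^\times$ of some nonzero class in $W_d^{(i-1)}$. I want to choose $\p_i$ outside the finite set $S_0\cup S_1\cup S_2\cup\{\p_1,\ldots,\p_{i-1}\}$ so that $\pi_1$ is a square in $K_{\p_i}$ while each defined $v_d$ is not. To this end, I would consider the Kummer extension $F := K(\sqrt{\pi_1},\sqrt{v_1},\sqrt{v_D})$; it is an elementary abelian $2$-extension of $K$ of rank at most $3$. Since $v_{\pi_1\OO_K}(\pi_1)=1$, $\pi_1$ is not a square in $K^\times$; and since $\bar v_d$ is nonzero in $\bar V_d$, $v_d$ represents neither the trivial class nor the class of $\pi_1$ in $K^\times/(K^\times)^2$. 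In particular $\pi_1$ is independent of $\{v_1,v_D\}$ modulo squares, and so $\Gal(F/K)$ contains an element that fixes $\sqrt{\pi_1}$ while negating both $\sqrt{v_1}$ and $\sqrt{v_D}$: this is immediate when $\pi_1,v_1,v_D$ are linearly independent modulo squares, and in the degenerate cases $v_D\equiv v_1$ or $v_D\equiv\pi_1 v_1\pmod{(K^\times)^2}$ the element that fixes $\sqrt{\pi_1}$ and negates $\sqrt{v_1}$ automatically negates $\sqrt{v_D}$ as well.

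Chebotarev applied to $F/K$ then produces infinitely many primes of $K$ whose Frobenius equals this element, and in particular a prime $\p_i$ outside the forbidden finite set with the desired local behavior. This choice maintains the invariant, and after $n$ steps $W_d^{(n)}=0$ for both $d$, which is precisely the statement $V_d\cap\bigcap_{i=1}^n\ker\varphi_{\p_i}=\langle \pi_1\cdot (K^\times)^2\rangle$. I expect the main technical subtlety to be the short case analysis of linear dependencies among $\pi_1,v_1,v_D$ modulo squares in the Kummer step; but because $\pi_1$ remains independent of $\{v_1,v_D\}$ in every case, the required Frobenius element is always available in $\Gal(F/K)$.
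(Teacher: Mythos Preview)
Your proposal is correct and follows essentially the same route as the paper: both arguments pick the primes one at a time, at each step selecting $u\in W_1\setminus\langle\pi_1\rangle$ and $v\in W_D\setminus\langle\pi_1\rangle$ and applying Chebotarev to $K(\sqrt{\pi_1},\sqrt{u},\sqrt{v})/K$ to find a prime where $\pi_1$ is a square but $u,v$ are not (the paper phrases this as a maximality--contradiction argument and works in $V_d$ rather than your quotient $\bar V_d$, but these differences are cosmetic). One minor wording issue: your assertion that ``$\pi_1$ is independent of $\{v_1,v_D\}$ modulo squares'' is not literally true in the degenerate case $v_D\equiv\pi_1 v_1$ (there $\pi_1\equiv v_1v_D$), but your case analysis immediately afterward handles this case correctly, so the argument stands.
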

\begin{proof}
We shall view $V_1$ and $V_D$ as vector spaces over $\FF_2$.  Take $m\geq 0$ maximal for which there are distinct nonzero prime ideals $\p_{1},\ldots, \p_{m} \notin S_0\cup S_1\cup S_2$ of $\OO_K$ such that 
\begin{itemize}
\item
$W_1:=V_1\cap \bigcap_{i=1}^{m} \ker \varphi_{\p_i}$ has codimension $m$ in $V_1$ and contains $\pi_1 \cdot(K^\times)^2$,
\item
$W_D:=V_D\cap \bigcap_{i=1}^{m} \ker \varphi_{\p_i}$ has codimension $m$ in $V_D$ and contains $\pi_1\cdot (K^\times)^2$,
\end{itemize}
This is well-defined since the case $m=0$ is always possible; $V_1$ and $V_D$ contain the square class of $\pi_1$.   Since $\dim_{\FF_2} V_1 =\dim_{\FF_2} V_D= n+1$ and $\pi_1\cdot(K^\times)^2\neq 1$, we must have $m\leq n$.

Suppose that $m<n$ and hence $W_1$ and $W_D$ have dimension $n+1-m \geq 2$ over $\FF_2$.    There are units $u,v\in \OO_{K,S_0\cup S_1\cup S_2}^\times$ such that $u\cdot(K^\times)^2$ and $v\cdot(K^\times)^2$ lie in $W_1$ and $W_D$, respectively, and both do not lie in the group generated by $\pi_1\cdot(K^\times)^2$.  The subgroup of $K^\times/(K^\times)^2$ generated by $u$, $v$ and $\pi_1$ either has dimension $3$ over $\FF_2$ or $uv \cdot(K^\times)^2$ lies in the group $\langle \pi_1 \cdot(K^\times)^2\rangle$.   By the Chebotarev density theorem applied to the extension $K(\sqrt{u},\sqrt{v},\sqrt{\pi_1})/K$, we find that there is a nonzero prime ideal $\p \notin S_0\cup S_1\cup S_2\cup \{\p_1,\ldots, \p_m\}$ such that $u$ and $v$ are not squares in $K_\p$ and $\pi_1$ is a square in $K_\p$.   However with $\p_{m+1}:=\p$, this contradicts the maximality of $m$.   Therefore, $m=n$ and the lemma follows.
\end{proof}

For the rest of the proof, we fix prime ideals $\p_1,\ldots, \p_n$ as in Lemma~\ref{L:S3}.   

\begin{lemma} \label{L:p0 existence}
There is a nonzero prime ideal $\p_0\notin S_0\cup S_1 \cup S_2 \cup \{\p_1,\ldots, \p_n\}$ of $\OO_K$ such that every element of $\OO_{K,S_0\cup S_1 \cup S_2}^\times$ is a square in $K_{\p_0}$.
\end{lemma}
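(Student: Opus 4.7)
The plan is to invoke the Chebotarev density theorem applied to a suitable finite Kummer extension. Since $\OO_{K,S_0\cup S_1\cup S_2}^\times$ is a finitely generated abelian group (by Dirichlet's $S$-unit theorem), the quotient $\OO_{K,S_0\cup S_1\cup S_2}^\times/(\OO_{K,S_0\cup S_1\cup S_2}^\times)^2$ is a finite elementary abelian $2$-group. First I would fix representatives $u_1,\ldots,u_r\in \OO_{K,S_0\cup S_1\cup S_2}^\times$ of a basis of this $\FF_2$-vector space.

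Next I would form the compositum $M:=K(\sqrt{u_1},\ldots,\sqrt{u_r})$, which is a finite (abelian, of exponent dividing $2$) extension of $K$. Let $T$ denote the finite set $S_0\cup S_1\cup S_2\cup\{\p_1,\ldots,\p_n\}$ together with all nonzero prime ideals of $\OO_K$ that are ramified in $M/K$. By the Chebotarev density theorem, there exist infinitely many nonzero prime ideals $\p_0$ of $\OO_K$, not in $T$, that split completely in $M$. Fix any such $\p_0$; it automatically lies outside $S_0\cup S_1\cup S_2\cup\{\p_1,\ldots,\p_n\}$.

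For such $\p_0$, the completion of $M$ at any prime above $\p_0$ equals $K_{\p_0}$, so $\sqrt{u_i}\in K_{\p_0}$ for every $i$, i.e.\ each $u_i$ is a square in $K_{\p_0}$. Since the $u_i$ generate $\OO_{K,S_0\cup S_1\cup S_2}^\times$ modulo squares, every element of $\OO_{K,S_0\cup S_1\cup S_2}^\times$ is a square in $K_{\p_0}$, as required.

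There is no serious obstacle here: the only subtlety is to make sure we only need finitely many square roots to adjoin, which is guaranteed because the quotient by squares is finite. Chebotarev then delivers a prime avoiding any prescribed finite set.
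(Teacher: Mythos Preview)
Your argument is correct and is essentially identical to the paper's own proof: the paper also adjoins square roots of a finite generating set of $\OO_{K,S_0\cup S_1\cup S_2}^\times$ and applies Chebotarev to find a prime outside the prescribed finite set that splits completely in this Kummer extension. The only cosmetic difference is that the paper takes generators of the unit group itself rather than representatives modulo squares, which yields the same field.
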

\begin{proof}
Let $u_1,\ldots, u_m$ be a set of generators of the group $\OO_{K,S_0\cup S_1 \cup S_2}^\times$; it is finitely generated by Dirichlet's unit theorem.   Define $F:=K(\sqrt{u_1},\ldots, \sqrt{u_m})$.   Then any nonzero prime ideal $\p\notin S_0\cup S_1\cup S_2\cup \{\p_1,\ldots, \p_n\}$ of $\OO_K$ that splits completely in $F$ will have the desired properties; the existence of such primes follows from the Chebotarev density theorem.
\end{proof}

For the rest of the proof, we fix a prime $\p_0$ as in Lemma~\ref{L:p0 existence}.   Define $S_3:=\{\p_0,\p_1,\ldots, \p_n\}$.   Define the finite set
\[
S:=S_0\cup S_1\cup S_2 \cup S_3
\]
which consists of nonzero prime ideals of $\OO_K$.  Fix a value $\varepsilon \in \{\pm 1\}$; we will make a specific choice later.\\

Take any prime ideal $\p\in S_2$.     Let $U_\p$ be the set of pairs $(a,b) \in K_\p^2$ that satisfy the following conditions:
\begin{itemize}
\item 
$ab(a+b)(a-b)\neq 0$, 
\item
$(v_\p(a),v_\p(a+b),v_\p(a-b))=(-1,1,-1)$,
\item
$-2a(a+b)$ is a square in $K_\p$,
\item
if $\p=\pi_1\OO_K$, then $(a-b)\pi_1$ is a square in $K_\p$ when $\varepsilon=1$ and $(a-b)\pi_1$ is not a square in $K_\p$ when $\varepsilon=-1$.
\end{itemize}
The set $U_\p$ is open in $K_\p^2$.  The set $U_\p$ is nonempty by Lemma~\ref{L:local existence} (when $\p=\pi_1\OO_K$, take the uniformizer $\omega:=\pi_1$ of $K_\p$). We define $\Phi_{1,\p}=K_\p^\times/(K_\p^\times)^2$ and $\Phi_{D,\p}=K_\p^\times/(K_\p^\times)^2$.

Take any prime ideal $\p\in S_3$.     Let $U_\p$ be the set of pairs $(a,b) \in K_\p^2$ that satisfy the following conditions:
\begin{itemize}
\item 
$ab(a+b)(a-b)\neq 0$, 
\item
$(v_\p(a),v_\p(a+b),v_\p(a-b))=(0,0,1)$,
\item
if $\p=\p_0$, then $a$ is not a square in $K_{\p}$.
\end{itemize}
The set $U_\p$ is nonempty by Lemma~\ref{L:local existence}.  We define $\Phi_{1,\p}$ and $\Phi_{D,\p}$ to be the trivial subgroup of $K_\p^\times/(K_\p^\times)^2$.  We now show that these new sets $U_\p$ have the same property as before.

\begin{lemma} \label{L:Phidp validation}
Take any $\p\in S$ and any $(a,b) \in K^2$ that lies in $U_\p$.  Then $ab(a+b)(a-b)\neq 0$ and 
\[
\operatorname{Im}(\delta_{1,\p})=\Phi_{1,\p} \quad \text{ and } \quad \operatorname{Im}(\delta_{D,\p})=\Phi_{D,\p}.
\] 
Note that $\delta_{1,\p}$ and $\delta_{D,\p}$ depend on $(a,b)$ while the groups $\Phi_{1,\p}$ and $\Phi_{D,\p}$ do not.  
\end{lemma}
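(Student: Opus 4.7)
The plan is to reduce the claim to Lemmas \ref{L:set low conditions}(a) and \ref{L:Selmer} case by case according to which piece of $S=S_0\cup S_1\cup S_2\cup S_3$ the prime $\p$ lies in. In every case, the condition $ab(a+b)(a-b)\neq 0$ is an explicit part of the definition of $U_\p$ (or follows from the prescribed valuations being well-defined integers), so that half of the claim is immediate. For $\p\in S_0\cup S_1$, the identity $\operatorname{Im}(\delta_{d,\p})=\Phi_{d,\p}$ is exactly Lemma \ref{L:set low conditions}(\ref{L:set low conditions a}). Thus the real content is the verification for $\p\in S_2$ and $\p\in S_3$.

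For $\p\in S_2$, I would invoke Lemma \ref{L:Selmer}(\ref{L:Selmer vi}). By construction $U_\p$ forces $(v_\p(a),v_\p(a+b),v_\p(a-b))=(-1,1,-1)$ and $-2a(a+b)$ to be a square in $K_\p$. Since $\p\in S_2$ is disjoint from $S_0$, we have $\p\nmid 2$ and $v_\p(D)=0$; moreover $\p$ splits in $L$ by Lemma \ref{L:last generators}, so $D\in (K_\p^\times)^2$. Consequently $-2a(a+b)d$ is a square in $K_\p$ for both $d=1$ and $d=D$, and Lemma \ref{L:Selmer}(\ref{L:Selmer vi}) gives $|\operatorname{Im}(\delta_{d,\p})|=4$. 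Since $\p\nmid 2$ implies $|K_\p^\times/(K_\p^\times)^2|=4$, this forces $\operatorname{Im}(\delta_{d,\p})=K_\p^\times/(K_\p^\times)^2=\Phi_{d,\p}$, as required.

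For $\p\in S_3$, the set $U_\p$ specifies $(v_\p(a),v_\p(a+b),v_\p(a-b))=(0,0,1)$. Again $\p\notin S_0$ gives $v_\p(2)=v_\p(D)=0$, so Lemma \ref{L:Selmer}(\ref{L:Selmer ii}) applies to both $d=1$ and $d=D$ and yields $|\operatorname{Im}(\delta_{d,\p})|=1$. Hence $\operatorname{Im}(\delta_{d,\p})$ is the trivial subgroup, matching the definition of $\Phi_{d,\p}$.

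Most of the reasoning is a routine unpacking of definitions, so there is no substantial obstacle; the one point that requires care is recording why the squareness hypotheses of Lemma \ref{L:Selmer}(\ref{L:Selmer vi}) propagate from the case $d=1$ (built into $U_\p$) to the case $d=D$, which is precisely where the splitting of $\p\in S_2$ in $L$ is used. Once this is observed, nothing about the auxiliary sign $\varepsilon$ or the prime $\pi_1$ enters the verification — those choices are reserved for later $2$-descent computations.
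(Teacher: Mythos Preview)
Your argument is correct and matches the paper's own proof essentially line for line: split into the cases $\p\in S_0\cup S_1$, $\p\in S_2$, $\p\in S_3$, invoke Lemma~\ref{L:set low conditions}(\ref{L:set low conditions a}) in the first case and Lemma~\ref{L:Selmer}(\ref{L:Selmer vi}),(\ref{L:Selmer ii}) in the latter two, using that primes in $S_2$ split in $L$ to pass from $d=1$ to $d=D$. You have even made explicit a couple of points (e.g.\ $\p\nmid 2$, $v_\p(D)=0$, $|K_\p^\times/(K_\p^\times)^2|=4$) that the paper leaves implicit.
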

\begin{proof}
When $\p\in S_0\cup S_1$, this is Lemma~\ref{L:set low conditions}(\ref{L:set low conditions a}).

Suppose that $\p \in S_2$ and take any $d\in \{1,D\}$.   By our choice of $U_\p$, we have $(v_\p(a),v_\p(a+b),v_\p(a-b))=(-1,1,-1)$ and $-2a(a+b)$ is a square in $K_\p$.  Also $-2a(a+b)D$ is a square in $K_\p$ since every prime in $S_2$ splits in $L$.   Therefore, $|\operatorname{Im}(\delta_{d,\p})| =4$ by Lemma~\ref{L:Selmer}(\ref{L:Selmer vi}).   Thus $\operatorname{Im}(\delta_{d,\p})=K_\p^\times/(K_\p^\times)^2=\Phi_{d,\p}$.

Suppose that $\p \in S_3$ and take any $d\in \{1,D\}$.  By our choice of $U_\p$, we have $(v_\p(a),v_\p(a+b),v_\p(a-b))=(0,0,1)$.   Therefore, $|\operatorname{Im}(\delta_{d,\p})| =1$ by Lemma~\ref{L:Selmer}(\ref{L:Selmer ii}).   Thus $\operatorname{Im}(\delta_{d,\p})=1=\Phi_{d,\p}$.
\end{proof}

\subsection{A choice of elliptic curve and a $2$-descent}

With a fixed $\varepsilon\in \{\pm 1\}$ still to be determined, we have constructed a finite set $S$ of prime ideals and we have a nonempty open subset $U_\p$ of $K_\p^2$ for each $\p \in S$.   We also have a specific prime $\pi_1 \in \OO_K$ that generates a prime ideal in $S$.

For the rest of the proof we take any $a,b\in \OO_K$ for which the following hold:
\begin{itemize}
\item
$a$, $a+b$ and $a-b$ generate distinct nonzero prime ideals of $\OO_{K,S}$,
\item
$(a,b)$ lies in $U_\p$ for all $\p\in S$,
\item
$0<a<b$ in $K_v$ for all real places of $K$.
\end{itemize}
If no such $a$ and $b$ exist, then Theorem~\ref{T:true} trivially holds.  We have $ab(a+b)(a-b)\neq 0$.

For $d\in \{1,D\}$, we have an elliptic curve $E_d$ over $K$ defined by (\ref{E:main proof}) and an elliptic curve $E'_d$ over $K$ defined by (\ref{E:main2 proof}).   The curve $E_D$ is the quadratic twist of $E_1$ by $D$.   To finish the proof of Theorem~\ref{T:true} we need to show that $E_1$ has rank $1$ and $E_D$ has rank $0$.

Since $a$, $a+b$ and $a-b$ generate distinct nonzero prime ideals of $\OO_{K,S}$, 
there are distinct nonzero prime ideals $\q_1$, $\q_2$ and $\q_3$ of $\OO_K$ that are not in $S$ such that 
\begin{align} \label{E:q}
a\OO_{K,S} = \q_1 \OO_{K,S}, \quad (a+b)\OO_{K,S} = \q_2 \OO_{K,S}\quad \text{ and  }\quad (a-b)\OO_{K,S} = \q_3 \OO_{K,S}.
\end{align}
We now work out some properties for the prime ideals $\q_2$ and $\q_3$.

\begin{lemma} \label{L:pi1 not square mod q3}
For an appropriate initial choice of $\varepsilon \in \{\pm 1\}$,  $\pi_1$ is not a square in $K_{\q_3}$.
\end{lemma}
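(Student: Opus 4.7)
The plan is to extract the residue symbol $\left(\frac{\pi_1}{\q_3}\right)$ from Hilbert's reciprocity law applied to the pair $(a-b, \pi_1) \in K^\times \times K^\times$, namely $\prod_v (a-b, \pi_1)_v = 1$, where the product runs over all places of $K$. Almost every local factor will be forced to be $1$ by the construction of $\pi_1$ and by the local conditions defining the sets $U_\p$; the surviving factors will live at $\q_3$, at the primes in $S_2 \setminus \{\pi_1\OO_K\}$, and at $\pi_1 \OO_K$ itself. The factor at $\pi_1 \OO_K$ is the one controlled by $\varepsilon$, and a suitable initial choice will force $\left(\frac{\pi_1}{\q_3}\right) = -1$.

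First I would dispose of the easy places. At complex places the Hilbert symbol is trivially $1$. At real places, $\pi_1 > 0$ in $K_v$ because $\pi_1 \in K^{\m,1}$, so the symbol is $1$. At primes $\p \in S_0 \cup S_1$, $\pi_1$ is a square in $K_\p$ (again because $\pi_1 \in K^{\m,1}$ and $\m_0$ was taken sufficiently divisible in the construction of Lemma~\ref{L:last generators}), so the symbol vanishes. At primes $\p \in S_3 = \{\p_0, \p_1, \ldots, \p_n\}$, the element $\pi_1$ is likewise a square in $K_\p$: for $\p_0$ by Lemma~\ref{L:p0 existence}, and for each $\p_i$ by Lemma~\ref{L:S3}, which places $\pi_1 \cdot (K^\times)^2$ in $\bigcap_i \ker \varphi_{\p_i}$. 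At any remaining finite prime $\p \notin S \cup \{\q_3\}$, both $a-b$ and $\pi_1$ are units at $\p$ and $\p \nmid 2$ (since all primes above $2$ lie in $S_0$), so the tame symbol vanishes.

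Next I would evaluate the surviving factors using the standard tame Hilbert symbol formulas in odd residue characteristic. At $\p = \q_3$, where $\pi_1$ is a unit and $v_{\q_3}(a-b) = 1$, one gets $(a-b, \pi_1)_{\q_3} = \left(\frac{\pi_1}{\q_3}\right)$. For each $\p \in S_2 \setminus \{\pi_1\OO_K\}$, where $\pi_1$ is a unit and $v_\p(a-b) = -1$, one gets $(a-b, \pi_1)_\p = \left(\frac{\pi_1}{\p}\right)$; crucially this depends only on $\pi_1$ and $\p$, so the product $C := \prod_{\p \in S_2 \setminus \{\pi_1\OO_K\}} \left(\frac{\pi_1}{\p}\right)$ is a constant intrinsic to the fixed data. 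At $\p = \pi_1 \OO_K$, both elements have nonzero valuation: writing $a-b = u \pi_1^{-1}$ with $u := (a-b)\pi_1 \in \OO_\p^\times$ and expanding bilinearly using the identity $(\pi_1, \pi_1)_\p = (\pi_1, -1)_\p$, one finds $(a-b, \pi_1)_\p = \left(\frac{-u}{\p}\right)$. The defining condition of $U_{\pi_1\OO_K}$ makes $u$ a square in $K_\p$ exactly when $\varepsilon = 1$, so this factor equals $\varepsilon \cdot \left(\frac{-1}{\pi_1\OO_K}\right)$.

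Assembling everything, Hilbert reciprocity yields $\left(\frac{\pi_1}{\q_3}\right) = \varepsilon \cdot C \cdot \left(\frac{-1}{\pi_1\OO_K}\right)$, a formula whose right-hand side is independent of the particular $a$ and $b$. Setting $\varepsilon := -C \cdot \left(\frac{-1}{\pi_1\OO_K}\right) \in \{\pm 1\}$ forces $\left(\frac{\pi_1}{\q_3}\right) = -1$, as required. The main obstacle is the bookkeeping at $\p = \pi_1\OO_K$, where one must correctly invoke the identity $(\pi_1, \pi_1)_\p = (\pi_1, -1)_\p$ and translate the local condition on $(a-b)\pi_1$ into the Legendre symbol of the unit $u$; the rest of the argument is a systematic accounting of which primes are invisible to the symbol $(\cdot, \pi_1)_v$.
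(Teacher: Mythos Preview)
Your proof is correct and follows essentially the same approach as the paper: both apply quadratic reciprocity to isolate $\legendre{\pi_1}{\q_3}$, verify that almost all local contributions vanish because $\pi_1$ was constructed to be a local square at the places in $S_0\cup S_1\cup S_\infty$ (and, as you additionally observe, at $S_3$), and then note that the remaining factors are independent of $a,b$ so that $\varepsilon$ can be chosen to force the desired sign. The only cosmetic difference is packaging: the paper works with power residue symbols and the identity $\legendre{(a-b)\pi_1}{\pi_1}=\legendre{\pi_1}{(a-b)\pi_1}$ from Neukirch's general reciprocity law, whereas you use Hilbert symbols and the product formula $\prod_v (a-b,\pi_1)_v=1$, which makes the computation at $\pi_1\OO_K$ slightly more explicit (producing the extra factor $\legendre{-1}{\pi_1\OO_K}$ that the paper absorbs into its unspecified constant).
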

\begin{proof}
For each nonzero prime ideal $\p$ of $\OO_K$, define $e_\p:=v_\p(a-b)$.  Since $(a-b)\OO_{K,S} = \q_3 \OO_{K,S}$, we have  $(a-b)\OO_{K} = \q_3 \prod_{\p \in S} \p^{e_\p}$.  With $\p=\pi_1\OO_K$, we have $(a,b) \in U_\p$ and hence $v_\p((a-b)\pi_1)=-1+1=0$.  Therefore, 
\begin{align} \label{E:(a-b)pi}
(a-b)\pi_1\OO_{K} = \q_3 \prod_{\p \in S,\, \p\neq (\pi_1)} \p^{e_\p}= \q_3 \prod_{\p \in S_1\cup S_2\cup S_3, \,\p\neq (\pi_1)} \p^{e_\p},
\end{align}
where the last equality uses Lemma~\ref{L:set low conditions}(\ref{L:set low conditions c}).  Using second power residue symbols for the field $K$, cf.~\cite[IV \S8]{Neukirch}, we have
\[
\legendre{(a-b)\pi_1}{\pi_1}=\legendre{\pi_1}{(a-b)\pi_1} = \legendre{\pi_1}{\q_3} \cdot \prod_{\p\in S_1 \cup S_2 \cup S_3,\, \p\neq (\pi_1)} \legendre{\pi_1}{\p}^{e_\p},
\]
where the first equality uses the general reciprocity law \cite[IV Theorem 8.3]{Neukirch} and the second equality uses (\ref{E:(a-b)pi}).   Note that in applying the reciprocity law, we have used that $\pi_1$ is a square in $K_v$ for all infinite places $v$ of $K$ and $\pi_1$ is a square in $K_\p$ for all prime ideals $\p$ of $\OO_K$ dividing $2$.  With $\p=\pi_1\OO_K$, $(a,b)\in U_\p$ implies that $(a-b)\pi_1 \in \OO_\p^\times$ is a square in $K_\p$ if and only if $\varepsilon=1$.  Therefore, $\legendre{(a-b)\pi_1}{\pi_1}=\varepsilon$ and hence
\begin{align} \label{E:pi1 mod q3}
\legendre{\pi_1}{\q_3}= \varepsilon  \prod_{\p\in S_1 \cup S_2 \cup S_3,\, \p\neq (\pi_1)} \legendre{\pi_1}{\p}^{e_\p}.
\end{align}
If $\p\in S_2$, $(a,b)\in U_\p$ implies that $e_\p=-1$.  If $\p\in S_3$, $(a,b)\in U_\p$ implies that $e_\p=1$.  If $\p\in S_1$, $(a,b)\in U_\p$ implies that $e_\p$ does not depend on the choice of $a$ and $b$ by Lemma~\ref{L:set low conditions}(\ref{L:set low conditions b}).   Therefore, the right-hand side of (\ref{E:pi1 mod q3}) does not depend on the choice of $a$ and $b$.   So by making a suitable  initial choice of $\varepsilon \in \{\pm 1\}$, that does not depend on $a$ and $b$, we will have $\legendre{\pi_1}{\q_3}=-1$ and hence $\pi_1$ is not a square in $K_{\q_3}$.
\end{proof}

For the rest of the proof, we shall assume that $\varepsilon\in \{\pm 1\}$ has been chosen so that the conclusion of Lemma~\ref{L:pi1 not square mod q3} holds.

\begin{lemma} \label{L:q2 is inert}
The prime $\q_2$ is inert in $L$.
\end{lemma}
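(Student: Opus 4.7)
The plan is to prove $\psi([\q_2]) = -1$; this will force $\q_2$ to be inert in $L$, since $\q_2 \notin S_0$ is unramified in $L$ and $\psi$ detects splitting for unramified primes. Thus the whole argument takes place in the ray class group $\Cl_K^{\n}$.

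To begin, I would use the principal ideal generated by $a+b$ to pin down the class of $\q_2$. From (\ref{E:q}) we have $(a+b)\OO_{K,S} = \q_2 \OO_{K,S}$, so as fractional ideals of $K$,
\[
(a+b)\OO_K \;=\; \q_2 \cdot \prod_{\p \in S} \p^{v_\p(a+b)}.
\]
By Lemma~\ref{L:set low conditions}(\ref{L:set low conditions c}) every $\p \in S_0$ satisfies $v_\p(a+b) = 0$, so the $S_0$-factors drop. Since $S_1$, $S_2$ and $S_3$ are disjoint from $S_0$ and the support of $\n_0$ lies inside $S_0$, the identity actually lives in $\calI_K^{\n}$.

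Next, I would verify that $a+b \in K^{\n,1}$, making $(a+b)\OO_K$ trivial in $\Cl_K^{\n}$. For each prime $\p \mid \n_0$ we have $\p \in S_0$, so Lemma~\ref{L:set low conditions}(\ref{L:set low conditions d}) gives $a+b \equiv 1 \pmod{\n_0 \OO_\p}$. The hypothesis $0 < a < b$ at every real place $v$ of $K$ forces $a+b > 0$ at every real place, which handles any archimedean component of the modulus $\n$. Consequently
\[
\psi([\q_2]) \;=\; \prod_{\p \in S_1 \cup S_2 \cup S_3} \psi([\p])^{v_\p(a+b)},
\]
using that $\psi$ takes values in $\{\pm 1\}$.

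The final step is to evaluate this product block by block. For $\p \in S_2$, Lemma~\ref{L:last generators} guarantees that $\p$ splits in $L$, giving $\psi([\p]) = 1$. For $\p \in S_3 = \{\p_0,\p_1,\dots,\p_n\}$, the defining condition of $U_\p$ forces $(v_\p(a), v_\p(a+b), v_\p(a-b)) = (0,0,1)$, so $v_\p(a+b) = 0$. Hence only $\p \in S_1$ can contribute: split primes in $S_1$ yield a factor $+1$, and inert primes yield $(-1)^{v_\p(a+b)}$. Therefore
\[
\psi([\q_2]) \;=\; (-1)^{\sum_{\p \in S_1,\ \p \text{ inert in } L} v_\p(a+b)},
\]
which equals $-1$ by Lemma~\ref{L:set low conditions}(\ref{L:set low conditions e}). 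There is no real obstacle in this argument; condition (\ref{L:set low conditions e}) was engineered precisely so that this last parity would come out odd, and the lemma is just the cash-out of the careful bookkeeping already built into $S_0$, $S_1$, $S_2$, $S_3$ and the sets $U_\p$.
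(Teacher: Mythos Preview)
Your proof is correct and follows essentially the same approach as the paper: factor $(a+b)\OO_K$ over $S\cup\{\q_2\}$, use Lemma~\ref{L:set low conditions}(\ref{L:set low conditions c}),(\ref{L:set low conditions d}) and the archimedean condition to see that $(a+b)\OO_K$ is trivial in $\Cl_K^{\n}$, then evaluate $\psi$ block by block using the splitting of $S_2$, the vanishing of $v_\p(a+b)$ on $S_3$, and the parity condition of Lemma~\ref{L:set low conditions}(\ref{L:set low conditions e}). The only difference is cosmetic ordering; the paper first isolates $\psi([\q_2]) = -\psi([(a+b)\OO_K])$ and then shows the latter is $1$, while you first show triviality and then read off the product.
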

\begin{proof}
For $\p\in S_0$, we have $v_\p(a+b)=0$ by Lemma~\ref{L:set low conditions}(\ref{L:set low conditions c}).   For $\p\in S_3$, $(a,b)\in U_\p$ implies that $v_\p(a+b)=0$.  Since $(a+b)\OO_{K,S} = \q_2 \OO_{K,S}$, we have  
\[
(a+b)\OO_{K} = \q_2 \prod_{\p \in S} \p^{v_\p(a+b)} = \q_2 \prod_{\p \in S_1\cup S_2} \p^{v_\p(a+b)}.
\]
Taking their classes in $\Cl_K^{\n}$ and applying $\psi$ gives
\[
\psi([(a+b)\OO_K])= \psi([\q_2])   \prod_{\p \in S_1\cup S_2} \psi([\p])^{v_\p(a+b)}= \psi([\q_2]) \prod_{\p\in S_1,\, \p \text{ inert in $L$}} (-1)^{v_\p(a+b)},
\]
where the last equality uses that all the prime ideals in $S_2$ split in $L$.   Since $(a,b)\in U_\p$ for all $\p\in S_1$, Lemma~\ref{L:set low conditions}(\ref{L:set low conditions e}) implies that $\psi([\q_2])=-\psi([(a+b)\OO_K])$.

Since $(a,b)\in U_\p$ for all $\p \in S_0$, Lemma~\ref{L:set low conditions}(\ref{L:set low conditions d}) implies that $a+b\in 1+\n_0\OO_\p$ for all prime ideals $\p|\n_0$.  By our choice of $a$ and $b$, $a+b$ is positive in $K_v$ for all real place $v$ of $K$.   Therefore, we have $[(a+b)\OO_K]=1$ in $\Cl_K^{\n}$.  So $\psi([\q_2])=-1$ and hence $\q_2$ is inert in $L$.
\end{proof}

We now compute ratios of certain Selmer groups.

\begin{lemma} \label{L:Selmer ratio calculation}
We have 
\[
\frac{|\Sel_{\phi_1}(E_1/K)|}{|\Sel_{\hat\phi_1}(E'_1/K)|}=\frac{1}{2} \quad\text{ and }\quad \frac{|\Sel_{\phi_D}(E_D/K)|}{|\Sel_{\hat\phi_D}(E'_D/K)|}=1.
\]
\end{lemma}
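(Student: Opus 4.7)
The plan is to apply Lemma~\ref{L:Selmer ratio}(\ref{L:Selmer ratio i}), which converts each ratio into the product $\prod_v \tfrac{1}{2}|\operatorname{Im}(\delta_{d,v})|$ over all places $v$ of $K$. I would partition the places into four groups: those in $S\cup S_\infty$, the three primes $\q_1,\q_2,\q_3$ from~(\ref{E:q}), and all remaining finite primes. For a prime $\p$ in the last group, $v_\p(2)=v_\p(d)=0$ (the latter for $d=D$ since $S_0$ already absorbs the primes dividing $D$) and $v_\p(a)=v_\p(a+b)=v_\p(a-b)=0$, so Lemma~\ref{L:Selmer}(\ref{L:Selmer i}) gives $|\operatorname{Im}(\delta_{d,\p})|=2$, contributing a factor $1$ to the product.

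For $\p\in S$, I invoke Lemma~\ref{L:Phidp validation} to identify $\operatorname{Im}(\delta_{d,\p})$ with $\Phi_{d,\p}$. At each real place $v$, the coefficient $\beta=2a(a+b)^2(a-b)d^2$ of the Weierstrass model of $E_d$ is negative, because $0<a<b$ in $K_v$ forces $a-b<0$ while the remaining factors are positive; Lemma~\ref{L:Selmer ratio}(\ref{L:Selmer ratio iii}) then gives $|\operatorname{Im}(\delta_{d,v})|=1=|\Phi_{d,v}|$, and the same holds vacuously at complex places. The key bookkeeping step is then to combine Lemma~\ref{L:prelim Selmer product}(\ref{L:prelim Selmer product i}) with the identity $|V_d|=2^{|S_3|}$ (using $|S_3|=n+1$) and the explicit choices $|\Phi_{d,\p}|=4$ for $\p\in S_2$ and $|\Phi_{d,\p}|=1$ for $\p\in S_3$, which together give $\prod_{v\in S\cup S_\infty}\tfrac{1}{2}|\Phi_{d,v}|=1$ for both $d=1$ and $d=D$.

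It then remains to evaluate the local contributions at $\q_1$, $\q_2$, $\q_3$. By (\ref{E:q}) and the fact that $a$, $a+b$, $a-b$ generate distinct primes of $\OO_{K,S}$, the exponent vectors $(v_{\q_i}(a),v_{\q_i}(a+b),v_{\q_i}(a-b))$ are $(1,0,0)$, $(0,1,0)$, $(0,0,1)$ respectively. Lemma~\ref{L:Selmer}(\ref{L:Selmer iii}) contributes $1$ at $\q_1$, Lemma~\ref{L:Selmer}(\ref{L:Selmer ii}) contributes $\tfrac{1}{2}$ at $\q_3$, and the $\q_2$ factor is given by Lemma~\ref{L:Selmer}(\ref{L:Selmer iv})--(\ref{L:Selmer v}) according to whether $d$ is a square in $K_{\q_2}$. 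For $d=1$ this is automatic, giving a $\q_2$ factor of $1$ and a total product $\tfrac{1}{2}$; for $d=D$, Lemma~\ref{L:q2 is inert} shows $\q_2$ is inert in $L$, so $D$ is not a square in $K_{\q_2}$, the $\q_2$ factor becomes $2$, and the total product is $1$. The hardest part of this proof is really the bookkeeping in the second paragraph: making sure the earlier explicit choices of $\Phi_{d,\p}$ at primes of $S_2$ and $S_3$ combine with Lemma~\ref{L:prelim Selmer product}(\ref{L:prelim Selmer product i}) to give exactly the clean normalization $\prod_{v\in S\cup S_\infty}\tfrac{1}{2}|\Phi_{d,v}|=1$; every individual local computation then reduces to citing the appropriate case of Lemma~\ref{L:Selmer}.
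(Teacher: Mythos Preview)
Your proposal is correct and follows essentially the same approach as the paper's own proof: both apply Lemma~\ref{L:Selmer ratio}(\ref{L:Selmer ratio i}), trivialize the contributions outside $S\cup S_\infty\cup\{\q_1,\q_2,\q_3\}$ via Lemma~\ref{L:Selmer}(\ref{L:Selmer i}), use Lemma~\ref{L:Phidp validation} and Lemma~\ref{L:prelim Selmer product}(\ref{L:prelim Selmer product i}) together with $|V_d|=2^{n+1}=2^{|S_3|}$ and the explicit values $|\Phi_{d,\p}|=4,1$ on $S_2,S_3$ to reduce to the three primes $\q_1,\q_2,\q_3$, and then finish with Lemma~\ref{L:Selmer}(\ref{L:Selmer ii})--(\ref{L:Selmer v}) and Lemma~\ref{L:q2 is inert}. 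The only cosmetic difference is that you package the $S\cup S_\infty$ contribution as a single identity $\prod_{v\in S\cup S_\infty}\tfrac{1}{2}|\Phi_{d,v}|=1$, whereas the paper unwinds it in two steps.
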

\begin{proof}
Take any $d\in\{1,D\}$ and define $\tau_d:=|\Sel_{\phi_d}(E_d/K)|/|\Sel_{\hat\phi_d}(E'_d/K)|$.  By Lemma~\ref{L:Selmer ratio}(\ref{L:Selmer ratio i}), we have $\tau_d =  \prod_v \tfrac{1}{2} |\operatorname{Im}(\delta_{d,v})|$, where the product is over the places $v$ of $K$.   We have
\[
\tau_d = \prod_{v\in S\cup \{\q_1,\q_2,\q_3\} \cup S_\infty} \tfrac{1}{2} |\operatorname{Im}(\delta_{d,v})|
\]
by Lemma~\ref{L:Selmer}(\ref{L:Selmer i}) and (\ref{E:q}).

For $\p\in S$, $(a,b)\in U_\p$ implies that $\operatorname{Im}(\delta_{d,\p})=\Phi_{d,\p}$ by Lemma~\ref{L:Phidp validation}.  For each complex place $v$, we trivially have $\operatorname{Im}(\delta_{d,v})=1=\Phi_{d,v}$.  For each real place $v$, we have $0<a<b$ in $K_v$ by our choice of $a$ and $b$ and hence $2a(a+b)^2(a-b) d^2 <0$ in $K_v$.  For each real place $v$, Lemma~\ref{L:Selmer ratio}(\ref{L:Selmer ratio iii}) implies that $\operatorname{Im}(\delta_{d,v})=1=\Phi_{d,v}$.     For $\p\in \{\q_1,\q_2,\q_3\}$, we define $\Phi_{d,\p}:=\operatorname{Im}(\delta_{\p,d})$.

Therefore,
\[
\tau_d = \prod_{v\in S_0\cup S_1 \cup S_\infty} \tfrac{1}{2} |\Phi_{d,v}| \cdot \prod_{\p\in S_2 \cup S_3 \cup \{\q_1,\q_2,\q_3\} } \tfrac{1}{2}|\Phi_{d,\p}| =2^{-|S_2|} |V_d|  \prod_{\p\in S_2 \cup S_3 \cup \{\q_1,\q_2,\q_3\} } \tfrac{1}{2}|\Phi_{d,\p}| 
\]
where the last equality uses Lemma~\ref{L:prelim Selmer product}.   We have $|\Phi_{d,\p}|=4$ for $\p\in S_2$ and $|\Phi_{d,\p}|=1$ for $\p\in S_3$.   Therefore, 
\[
\tau_d = |V_d| 2^{-|S_3|} \prod_{\p\in \{\q_1,\q_2,\q_3\} } \tfrac{1}{2}|\Phi_{d,\p}| = \prod_{\p\in \{\q_1,\q_2,\q_3\} } \tfrac{1}{2}|\operatorname{Im}(\delta_{d,\p})|,
\]
where the last equality uses that $|V_d|=2^{n+1}$ and $|S_3|=n+1$. By (\ref{E:q}) and Lemma~\ref{L:Selmer}(\ref{L:Selmer iii}), we have $\tfrac{1}{2} |\operatorname{Im}(\delta_{d,\q_1})| =1$. By (\ref{E:q}) and Lemma~\ref{L:Selmer}(\ref{L:Selmer ii}), we have $\tfrac{1}{2} |\operatorname{Im}(\delta_{d,\q_3})| =1/2$.  Therefore, $\tau_d = |\operatorname{Im}(\delta_{d,\q_2})|/4$.

By (\ref{E:q}) and Lemma~\ref{L:Selmer}(\ref{L:Selmer iv}), we have $|\operatorname{Im}(\delta_{1,\q_2})|=2$ and hence $\tau_1=1/2$.  By Lemma~\ref{L:q2 is inert}, $\q_2$ is inert in $L$ and hence $D$ is not a square in $K_{\q_3}$.  By (\ref{E:q}) and Lemma~\ref{L:Selmer}(\ref{L:Selmer v}), we have $|\operatorname{Im}(\delta_{D,\q_2})|=4$ and hence $\tau_D=1$.
\end{proof}

We next compute the Selmer groups associated to the isogenies $\phi_1$ and $\phi_D$.

\begin{lemma} \label{L:first Selmer}
For $d\in \{1,D\}$, we have $\Sel_{\phi_d}(E_d/K)=\{1, 2a(a+b)\cdot (K^\times)^2\}$.
\end{lemma}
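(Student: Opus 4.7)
The inclusion $\{1, 2a(a+b) \cdot (K^\times)^2\} \subseteq \Sel_{\phi_d}(E_d/K)$ is immediate from $\delta_d((0,0)) = 2a(a+b) \cdot (K^\times)^2$, so the plan is to establish the reverse inclusion by reducing an arbitrary Selmer element to a standard form and then exploiting the specific condition encoded in $U_{\p_0}$.

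Take any $\lambda \cdot (K^\times)^2 \in \Sel_{\phi_d}(E_d/K)$. By Lemma~\ref{L:Selmer}(\ref{L:Selmer i}) at good-reduction primes $\p \notin S \cup \{\q_1, \q_2, \q_3\}$ and Lemma~\ref{L:Selmer}(\ref{L:Selmer ii}) at $\p \in S_3 \cup \{\q_3\}$ (whose defining conditions in $U_\p$ force the local data $(0,0,1)$), the valuation $v_\p(\lambda)$ is even at every prime outside $T := S_0 \cup S_1 \cup S_2 \cup \{\q_1, \q_2\}$.  Since $\OO_{K,S_0}$ is a PID, so is $\OO_{K,T}$, and we may multiply $\lambda$ by a square in $K^\times$ to assume $\lambda \in \OO_{K,T}^\times$. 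Because $a$ and $a+b$ are $T$-units whose $(v_{\q_1}, v_{\q_2})$-components are $(1,0)$ and $(0,1)$, they extend any $\FF_2$-basis of $\OO_{K, S_0 \cup S_1 \cup S_2}^\times/(\OO_{K, S_0 \cup S_1 \cup S_2}^\times)^2$ to one of $\OO_{K,T}^\times/(\OO_{K,T}^\times)^2$; hence we may write
\[
\lambda \cdot (K^\times)^2 = \lambda_0 \cdot a^\alpha (a+b)^\beta \cdot (K^\times)^2
\]
with $\lambda_0 \in \OO_{K, S_0 \cup S_1 \cup S_2}^\times$ and $\alpha, \beta \in \{0,1\}$.

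Multiplication by $2a(a+b) \cdot (K^\times)^2 \in \Sel_{\phi_d}(E_d/K)$ flips both $\alpha$ and $\beta$ modulo $2$, partitioning the four possibilities $(\alpha,\beta) \in \{0,1\}^2$ into two $\langle 2a(a+b)\rangle$-cosets according to the parity of $\alpha+\beta$. In the balanced case $\alpha+\beta \equiv 0$, we take the representative $(\alpha,\beta)=(0,0)$, so $\lambda \cdot (K^\times)^2 = \lambda_0 \cdot (K^\times)^2$. The Selmer conditions at $v \in S_0 \cup S_1 \cup S_\infty$ place $\lambda_0 \cdot (K^\times)^2 \in V_d$; the local-square conditions at $\p_1,\ldots,\p_n \in S_3$, combined with Lemma~\ref{L:S3}, then force $\lambda_0 \cdot (K^\times)^2 \in \langle \pi_1 \cdot (K^\times)^2\rangle$ (the condition at $\p_0$ being automatic via Lemma~\ref{L:p0 existence}). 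Finally, the local-square condition at $\q_3$, together with Lemma~\ref{L:pi1 not square mod q3}, rules out $\pi_1$, yielding $\lambda_0 \cdot (K^\times)^2 = 1$.

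The main step would appear to be the unbalanced case $\alpha + \beta \equiv 1$, but this is dispatched by the deliberate choice of $U_{\p_0}$. In this coset we take the representative $(\alpha,\beta)=(1,0)$, so $\lambda \cdot (K^\times)^2 = \lambda_0' \cdot a \cdot (K^\times)^2$ for some $\lambda_0' \in \OO_{K, S_0 \cup S_1 \cup S_2}^\times$. The Selmer condition at $\p_0 \in S_3$ demands that $\lambda_0' \cdot a$ is a square in $K_{\p_0}$; Lemma~\ref{L:p0 existence} ensures $\lambda_0'$ itself is a square in $K_{\p_0}$, so this forces $a$ to be a square in $K_{\p_0}$. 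But $U_{\p_0}$ was constructed so that $a$ is \emph{not} a square in $K_{\p_0}$, a contradiction. Hence the unbalanced coset contains no Selmer element, and $\Sel_{\phi_d}(E_d/K) = \{1, 2a(a+b)\cdot (K^\times)^2\}$.
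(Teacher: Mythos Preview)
Your proof is correct and follows essentially the same route as the paper's: reduce an arbitrary Selmer class to the form $u\cdot a^e(a+b)^{e'}$ with $u\in\OO_{K,S_0\cup S_1\cup S_2}^\times$, use the $2a(a+b)$ class to kill the $(a+b)$-exponent, use the $\p_0$ condition (that $a$ is a nonsquare there while every $S_0\cup S_1\cup S_2$-unit is a square) to kill the $a$-exponent, then apply Lemma~\ref{L:S3} and Lemma~\ref{L:pi1 not square mod q3} to pin down the unit. The only cosmetic difference is that you organize the argument as a balanced/unbalanced case split on $\alpha+\beta\pmod 2$, whereas the paper first normalizes to $c=ua^e$ and then shows $e$ is even; one small point you leave implicit is that $\operatorname{Im}(\delta_{d,v})=\Phi_{d,v}=1$ at the archimedean places (needed to land in $V_d$), which the paper verifies explicitly via $0<a<b$ and Lemma~\ref{L:Selmer ratio}(\ref{L:Selmer ratio iii}).
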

\begin{proof}
Take any $d\in \{1,D\}$.  From the point $(0,0)\in E'_d(K)$, we find that $2a(a+b)\cdot (K^\times)^2$ is an element of $\Sel_{\phi_d}(E_d/K) \subseteq K^\times/(K^\times)^2$.  Now take any $\alpha \in \Sel_{\phi_d}(E_d/K)$.   We need to show that $\alpha \in \{1, 2a(a+b)\cdot (K^\times)^2\}$.  

Choose an element $c \in K^\times$ in the square class $\alpha$.  For each $\p\in S$, $c\cdot (K_\p^\times)^2$ lies in $\operatorname{Im}(\delta_{d,\p})$ since $\alpha$ is an element of $\Sel_{\phi_d}(E_d/K)$.  For each $\p\in S$, we have $(a,b)\in U_\p$ and hence $c\cdot (K_\p^\times)^2$ is an element of $\Phi_{d,\p}$ by Lemma~\ref{L:Phidp validation}.

Take any nonzero prime ideal $\p\notin S_0\cup S_1 \cup S_2 \cup\{\q_1,\q_2\}$ of $\OO_K$.   If $\p \in S_3$, then $c$ is a square in $K_\p$ since $\Phi_{d,\p}=1$.  If $\p=\q_3$, then $c$ is a square in $K_\p$ by (\ref{E:q}) and Lemma~\ref{L:Selmer}(\ref{L:Selmer ii}).  If also $\p\notin S_3\cup \{\q_3\}$, then (\ref{E:q}) implies that $v_\p(a)=v_\p(a+b)=v_\p(a-b)=0$ and hence $c\cdot (K_\p^\times)^2$ is represented by an element of $\OO_\p^\times$ by Lemma~\ref{L:Selmer}(\ref{L:Selmer i}).  Therefore, the congruence
\[
v_\p(c)\equiv 0 \pmod{2}
\] 
holds for all nonzero prime ideals $\p \notin S_0\cup S_1\cup \{\q_1,\q_2\}$ of $\OO_K$.  Since $\OO_{K,S_0}$ is a PID by our choice of $S_0$, we may assume that $c\in \alpha$ was chosen so that $v_\p(c)=0$ for all $\p\notin S_0\cup S_1 \cup S_2 \cup\{\q_1,\q_2\}$.  We also have $v_\p(a)=v_\p(a+b)=0$ for all $\p\notin S_0\cup S_1 \cup S_2 \cup\{\q_1,\q_2\}$ because of (\ref{E:q}) and since $(a,b)\in U_\p$ for $\p\in S_3$.   Since $v_{\q_1}(a)=1$, $v_{\q_2}(a)=0$, $v_{\q_1}(a+b)=0$ and $v_{\q_2}(a+b)=1$, we find that 
\[
c = u a^e (a+b)^{e'}
\]
for unique integers $e,e'\in \ZZ$ and a unique $u\in \OO_{K,S_0\cup S_1\cup S_2}^\times$.  After multiplying $\alpha$ by an appropriate power of $2a(a+b)\cdot (K^\times)^2 \in \Sel_{\phi_d}(E_d/K)$ and using that $2$ is a unit in $\OO_{K,S_0\cup S_1\cup S_2}$, we may assume without loss of generality that  $c=u a^e$ with $u \in \OO_{K,S_0\cup S_1 \cup S_2}^\times$ and $e\in \ZZ$.    It suffices to show that $c$ is a square in $K^\times$.

Consider the prime $\p_0\in S_3$.   We know that $c=ua^e$ is a square in $K_{\p_0}$ since $\Phi_{d,\p_0}=1$. Since $u$ is a square in $K_{\p_0}$ by Lemma~\ref{L:p0 existence}, we deduce that $a^e$ is a square in $K_{\p_0}$.  However, $(a,b)\in U_{\p_0}$ implies that $a$ is not a square in $K_{\p_0}$.  Therefore, $e$ must be even.  After replacing $c$ by itself times a suitable square in $K^\times$, we may assume that $c \in \OO_{K,S_0\cup S_1\cup S_2}^\times$.   

For each complex place $v$, we trivially have $\operatorname{Im}(\delta_{d,v})=1=\Phi_{d,v}$.  For each real place $v$, we have $0<a<b$ in $K_v$ by our choice of $a$ and $b$ and hence $2a(a+b)^2(a-b) d^2 <0$ in $K_v$.  For each real place $v$, Lemma~\ref{L:Selmer ratio}(\ref{L:Selmer ratio iii}) then implies that $\operatorname{Im}(\delta_{d,v})=1=\Phi_{d,v}$.   Therefore, $c \cdot (K_v^\times)^2$ is an element of $\Phi_{d,v}$ for all $v\in S_0\cup S_1\cup S_\infty$.   In particular, $c \cdot (\OO_{K,S_0\cup S_1\cup S_2}^\times)^2$ lies in $\ker \gamma_d := V_d$.

For every $\p\in S_3$, $\Phi_{d,\p}=1$ and hence $c$ is a square in $K_\p$.   Since $c \cdot (\OO_{K,S_0\cup S_1\cup S_2}^\times)^2$ is in $V_d$ and $c \cdot (K_\p^\times)^2=(K_\p^\times)^2$ for all $\p\in \{\p_1,\ldots, \p_n\}\subseteq S_3$, Lemma~\ref{L:S3} implies that $c \cdot (K^\times)^2$ is in the group generated by $\pi_1\cdot (K^\times)^2$.  After replacing $c$ by itself times a suitable square in $K^\times$, we may assume that $c=1$ or $c=\pi_1$.

By Lemma~\ref{L:Selmer}(\ref{L:Selmer ii}), we have $\operatorname{Im}(\delta_{d,\q_3})=1$ and hence $c$ is a square in $K_{\q_3}$.   By our choice of $\varepsilon\in \{\pm 1\}$, Lemma~\ref{L:pi1 not square mod q3} implies that $\pi_1$ is not a square in $K_{\q_3}$.  Therefore, $c=1$ which completes the proof.
\end{proof}

\begin{lemma} \label{L:WMW}
\begin{romanenum}
\item \label{L:WMW i}
For $d\in \{1,D\}$, the group $E'_d(K)/\phi_d(E_d(K))$ is isomorphic to $\ZZ/2\ZZ$ and is generated by $(0,0)$.

\item \label{L:WMW ii}
The group $E_D(K)/\hat\phi_D(E'_D(K))$ is isomorphic to $\ZZ/2\ZZ$ and is generated by $(0,0)$.

\item \label{L:WMW iii}
The group $E_1(K)/\hat\phi_1(E'_1(K))$ is isomorphic to $(\ZZ/2\ZZ)^2$ and is generated by $P_0:=(0,0)$ and $P_1:=(-2a(a+b),2a(a+b)^2)$.
\end{romanenum}
\end{lemma}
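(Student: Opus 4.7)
The plan is to combine the $\phi$-Selmer computation from Lemma~\ref{L:first Selmer} and the Selmer-ratio identity from Lemma~\ref{L:Selmer ratio calculation} with the four-term exact sequence (\ref{E:exact MW}) and an explicit calculation of the dual descent map $\hat\delta_d\colon E_d(K)\to K^\times/(K^\times)^2$. For part (\ref{L:WMW i}), the formula in \S\ref{SS:2-descent} gives $\delta_d((0,0)_{E'_d}) = \beta'_d\cdot(K^\times)^2 = 2a(a+b)\cdot(K^\times)^2$, since $\beta'_d = 8a(a+b)^3 d^2$ differs from $2a(a+b)$ by the square $4(a+b)^2 d^2$. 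Because $\Sel_{\phi_d}(E_d/K) = \{1,\, 2a(a+b)\cdot(K^\times)^2\}$ by Lemma~\ref{L:first Selmer}, the Mordell--Weil quotient $E'_d(K)/\phi_d(E_d(K))$ already exhausts this Selmer group and is generated by $(0,0)$.

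Next I would use the distinctness of the primes $\q_1,\q_2,\q_3$ of $\OO_{K,S}$ generated by $a$, $a+b$, $a-b$ to control the 2-torsion. Since $v_{\q_1}(2a(a+b)) = 1$ is odd, $2a(a+b)$ is not a square in $K$, so $E_d(K)[2] = \langle(0,0)_{E_d}\rangle$ (the other 2-torsion lives over $K(\sqrt{2a(a+b)})$). Hence $\phi_d(E_d(K)[2])=0$ and the leftmost term of (\ref{E:exact MW}) has order $2$; combined with part (\ref{L:WMW i}), the exact sequence yields
\[
|E_d(K)/\hat\phi_d(E'_d(K))| = |E_d(K)/2E_d(K)| = 2^{1+\rank E_d(K)}.
\]
Lemmas~\ref{L:Selmer ratio calculation} and \ref{L:first Selmer} give $|\Sel_{\hat\phi_D}(E'_D/K)|=2$ and $|\Sel_{\hat\phi_1}(E'_1/K)|=4$, so the Selmer upper bound forces $\rank E_D(K) = 0$ and $\rank E_1(K) \leq 1$, with $|E_D(K)/\hat\phi_D(E'_D(K))| = 2$ and $|E_1(K)/\hat\phi_1(E'_1(K))| \leq 4$.

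To finish (\ref{L:WMW ii}) and (\ref{L:WMW iii}) I would make the dual descent map $\hat\delta_d\colon E_d(K)\to K^\times/(K^\times)^2$ explicit, tracing through the standard isomorphism $E_d\xrightarrow{\sim}(E'_d)'$, $(x,y)\mapsto(4x,8y)$, to obtain $\hat\delta_d((x,y))=x\cdot(K^\times)^2$ on nontrivial affine points and $\hat\delta_d((0,0)_{E_d}) = 2a(a-b)\cdot(K^\times)^2$. For (\ref{L:WMW ii}), $2a(a-b)$ has odd $\q_3$-valuation and is therefore not a square in $K$, so $(0,0)\notin\hat\phi_D(E'_D(K))$ and $(0,0)$ generates the order-$2$ quotient. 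For (\ref{L:WMW iii}), $\hat\delta_1(P_0) = 2a(a-b)\cdot(K^\times)^2$ and $\hat\delta_1(P_1) = -2a(a+b)\cdot(K^\times)^2$, and all three of $2a(a-b)$, $-2a(a+b)$, $-(a-b)(a+b)$ are non-squares in $K^\times$ by their odd valuations at $\q_3$, $\q_2$, $\q_2$ respectively. So $P_0,P_1$ span a subgroup of order $4$ in $E_1(K)/\hat\phi_1(E'_1(K))$, matching the Selmer upper bound and hence equal to the whole group. The main fiddly step is the explicit identification of $\hat\delta_d$ via the isomorphism $E_d\cong(E'_d)'$; once that is in hand, the non-square verifications are immediate from the $\q_i$-valuations.
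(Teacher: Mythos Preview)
Your proof is correct and follows essentially the same route as the paper: use Lemma~\ref{L:first Selmer} for part~(\ref{L:WMW i}), combine it with Lemma~\ref{L:Selmer ratio calculation} to get $|\Sel_{\hat\phi_d}(E'_d/K)|$, and then verify that the images $\hat\delta_d(P_0)=2a(a-b)\cdot(K^\times)^2$ and $\hat\delta_1(P_1)=-2a(a+b)\cdot(K^\times)^2$ generate groups of the right size via odd valuations at the $\q_i$. Your detour through the exact sequence~(\ref{E:exact MW}) and the rank bounds is unnecessary here---the paper simply uses the injection $E_d(K)/\hat\phi_d(E'_d(K))\hookrightarrow\Sel_{\hat\phi_d}(E'_d/K)$ directly and saves the rank conclusion for after the lemma---but it does no harm.
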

\begin{proof}
Take $d\in \{1,D\}$.  From \S\ref{SS:2-descent}, there is an injective homomorphism 
\[
\delta_d\colon E'_d(K)/\phi_d(E_d(K))\hookrightarrow \Sel_{\phi_d}(E_d/K)\subseteq K^\times/(K^\times)^2
\] 
for which $\delta_d((0,0))=2a(a+b)\cdot (K^\times)^2$.   Part (\ref{L:WMW i}) is now immediate from Lemma~\ref{L:first Selmer}; note that $2a(a+b)$ is not a square in $K^\times$ since its valuation with respect to the prime ideal $\q_1$ is $1$.    In particular, $|\Sel_{\phi_d}(E_d/K)|=2$.

By Lemma~\ref{L:Selmer ratio calculation}, we have $|\Sel_{\hat\phi_1}(E'_1/K)|=4$ and $|\Sel_{\hat\phi_D}(E'_D/K)|=2$.  From \S\ref{SS:2-descent}, there is an injective homomorphism 
\begin{align} \label{E:final delta}
\delta_d'\colon E_d(K)/\hat\phi_d(E'_d(K))\hookrightarrow \Sel_{\hat\phi}(E'_d/K) 
\end{align} 
that satisfies $\delta_d'((0,0))=2a(a-b)\cdot (K^\times)^2$.    Note that $2a(a-b)$ is not a square in $K^\times$ since its valuation with respect to $\q_1$ is $1$.  When $d=D$, (\ref{E:final delta}) is an isomorphism since $|\Sel_{\hat\phi}(E'_D/K)|=2$.   This completes the proof of (\ref{L:WMW ii}).

Finally consider $d=1$.  Note that $P_0$ and $P_1$ are points in $E_1(K)$.    By considering the valuations with respect to the prime ideals $\q_2$ and $\q_3$, we find that $\delta'_1(P_0)=2a(a-b)\cdot (K^\times)^2$ and $\delta'_1(P_1)=-2a(a+b) \cdot (K^\times)^2$ generate a subgroup of $\Sel_{\hat\phi}(E'/K)$ that has cardinality $4$.    Since $\Sel_{\hat\phi_1}(E'_1/K)$ has cardinality $4$, we deduce that (\ref{E:final delta}) is an isomorphism when $d=1$ and that the group $E_1(K)/\hat\phi_1(E'_1(K))$ is generated by $P_0$ and $P_1$.
\end{proof}

To finish the proof, we will show that $E_1$ has rank $1$ and $E_D$ has rank $0$.  Take any $d\in \{1,D\}$.  We have $E_d(K)[2]=\{0,(0,0)\}$ since the discriminant of $x^2+4a(a+b)dx+2a(a+b)^2(a-b)d^2$ is $8a(a+b)^3d^2$ which is not a square in $K$; it has valuation $1$ with respect to the prime ideal $\q_1$.   We have $\phi_d(E_d(K)[2]) =\phi_d(\langle (0,0) \rangle)=0$.   
By the exact sequence (\ref{E:exact MW}) and Lemma~\ref{L:WMW}(\ref{L:WMW i}), we find that the quotient map
\[
E_d(K)/2E_d(K) \to E_d(K)/\hat{\phi}_d(E'_d(K))
\]
is an isomorphism of groups. By Lemma~\ref{L:WMW}(\ref{L:WMW ii}) and (\ref{L:WMW iii}), we obtain isomorphisms 
\begin{align} \label{E:hard weak MW}
E_1(K)/2E_1(K)  \cong (\ZZ/2\ZZ)^2 \quad \text{and}\quad E_D(K)/2E_D(K)  \cong \ZZ/2\ZZ.
\end{align}
Since $E_d(K)$ is a finitely generated abelian group with $E_d(K)[2]\cong \ZZ/2\ZZ$, we have 
\begin{align} \label{E:easy weak MW}
E_d(K)/2E_d(K)\cong (\ZZ/2\ZZ)^{r_d+1},  
\end{align}
where $r_d$ is the rank of $E_d(K)$.   From (\ref{E:hard weak MW}) and (\ref{E:easy weak MW}), we conclude that $r_1=1$ and $r_D=0$.

\section{Proof of Theorems~\ref{T:cor} and \ref{T:main}} \label{S:secondary proofs}

We need only proof Theorem~\ref{T:main} since it clearly implies Theorem~\ref{T:cor}.    Let $L/K$ be any quadratic extension of number fields.   Choose a $D\in K^\times$ for which $L=K(\sqrt{D})$.
With this $K$ and $D$, we let $S$ and $\{U_\p\}_{\p \in S}$ be as in Theorem~\ref{T:true}.  We now apply Kai's result.

\begin{lemma} \label{L:Kai}
There are nonzero $a,b\in \OO_{K,S}$ such that $a$, $a+b$ and $a-b$ generate distinct nonzero prime ideals of $\OO_{K,S}$, $(a,b)$ lies in $U_\p$ for all $\p \in S$, and $0<a<b$ in $K_v$ for all real places $v$ of $K$.
\end{lemma}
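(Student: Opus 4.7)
The plan is to apply Kai's Proposition~13.2 --- the number-field version of the Green--Tao--Ziegler theorem on linear configurations of primes --- to the three integer-coefficient linear forms
\[
L_1(a,b)=a-b,\qquad L_2(a,b)=a,\qquad L_3(a,b)=a+b.
\]
Their simultaneous primality in $\OO_{K,S}$ produces the desired three-term arithmetic progression $a-b,\, a,\, a+b$ of primes.  Kai's theorem takes as input a finite set of primes together with prescribed $\p$-adic congruence conditions, an archimedean open region, and an admissible system of linear forms, and it outputs $(a,b)$ realizing simultaneous primality with the prescribed local and archimedean behavior.

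First I would rewrite the hypotheses of the lemma in the input format for Kai's theorem.  Since each $U_\p$ is open in $K_\p^2$, we may shrink it to a product of $\p$-adic balls, which corresponds to a congruence condition on $(a,b)$ modulo a sufficiently high power of $\p$.  At each real place $v$ the inequalities $0<a<b$ define a nonempty open cone in $K_v^2$, and no condition is imposed at complex places.  Next I would verify admissibility of $(L_1,L_2,L_3)$: the three forms are pairwise non-proportional over $\ZZ$, and for every prime $\p$ of $\OO_K$ outside $S$ there is no local obstruction to simultaneous coprimality (take, e.g., $a\equiv 1$, $b\equiv 0\pmod{\p}$).  At the primes $\p\in S$, the sets $U_\p$ were designed in Lemmas~\ref{L:set low conditions} and \ref{L:local existence} to be consistent with specific valuation patterns of $a, a+b, a-b$, so no hidden admissibility obstruction is introduced.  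Kai's result then yields $(a,b)\in\OO_{K,S}^2$ in the prescribed region such that $a$, $a+b$ and $a-b$ each generate a prime ideal of $\OO_{K,S}$.

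The main obstacle is the bookkeeping: translating the locally prescribed $U_\p$-conditions into the congruence-condition format of Kai's proposition and verifying their consistency with the admissibility hypothesis.  This compatibility is built into the preceding constructions, so no genuine local obstruction arises.  Distinctness of the three generated prime ideals is then ensured by the differing prescribed $\p$-adic valuations --- for instance, at $\p_0\in S_3$ one has $(v_{\p_0}(a), v_{\p_0}(a+b), v_{\p_0}(a-b))=(0,0,1)$, which separates $(a-b)$ from $(a)$ and from $(a+b)$, while the remaining coincidence $(a)=(a+b)$ is a codimension-one condition on $(a,b)$ that Kai's theorem avoids generically (or can be excluded by an additional local condition at a suitable auxiliary prime).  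Nonzeroness of $a$ and $b$ and $S$-integrality then follow automatically from the prescribed local and archimedean constraints.
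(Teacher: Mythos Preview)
Your approach is correct and is essentially the paper's: both invoke Kai's Proposition~13.2 for the three forms $a-b,\,a,\,a+b$, feeding in the $U_\p$ as $\p$-adic balls and an open archimedean region. The only places the paper is more concrete are that it encodes the real condition as $|a/b-\tfrac12|_v<\epsilon<\tfrac12$ together with $b>0$ (which unwinds to $0<a<b$), and it reads off distinctness of the three prime ideals directly from Kai's output rather than arguing it separately.
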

\begin{proof}
For each $\p\in S$, choose a pair $(a_\p,b_\p) \in U_\p$.   Since the sets $U_\p$ are open in $K_\p^2$ and $S$ is finite, we can choose $0<\epsilon < 1/2$ such that $\{(x,y) \in K_\p^2 : |x-a_\p|_\p<\epsilon,\, |y-b_\p|_\p <\epsilon\} \subseteq U_\p$ for all $\p \in S$.

Using Proposition~13.2 of \cite{Kai} we find that are nonzero $a,b\in \OO_{K,S}$ such that the following hold:
\begin{itemize}
\item
$|a-a_\p|_\p<\epsilon$ and $|b-b_\p|_\p <\epsilon$ for all $\p\in S$,
\item
$|a/b - 1/2|_v < \epsilon$ for all real places $v$ of $K$,
\item 
$b>0$ in $K_v$ for all real places $v$ of $K$,
\item
$a$, $a+b$ and $a-b$ generate distinct prime ideals of $\OO_{K,S}$.
\end{itemize}
From our choice of $\epsilon$, we have $(a,b)\in U_\p$ for all $\p\in S$.   Finally take any real place $v$ of $K_v$.  We have $|a/b-1/2|_v<\epsilon<1/2$ and hence $0<a/b< 1$ in $K_v$.  Since $b>0$ in $K_v$, we have $0<a<b$ in $K_v$.
\end{proof}

Take any $a,b\in \OO_{K,S}$ as in Lemma~\ref{L:Kai}.   By Theorem~\ref{T:true}, the elliptic curve $E_1$ over $K$ defined by (\ref{E:intro strategy weierstrass}) has rank $1$ and its quadratic twist $E_D$ by $D$ has rank $0$.  Since $L=K(\sqrt{D})$ and $E_D$ is the quadratic twist of $E_1$ by $D$, we have
\[
\rank \, E_1(L) = \rank\, E_1(K)  + \rank\, E_D(K);
\]
this can be proved by using the action of $\Gal(L/K)$ on $E(L)\otimes_\ZZ\QQ$ and considering the eigenspaces of the nonidentity element of $\Gal(L/K)$.   Thus $E_1(L)$ has rank $1+0=1$.  Therefore, $E_1$ is an elliptic curve over $K$ for which $E_1(K)$ and $E_1(L)$ both have rank $1$.  It remains to show that there are infinitely many such elliptic curves up to isomorphism.

Let $j\in K$ be the $j$-invariant of $E_1$.   We have
\[
j= 64\frac{(5a+3b)^3}{(a-b)^2(a+b)}.
\]
For each $\p\in S$, we let $U_\p'$ be the set of $(x,y)\in U_\p$ for which $64(5x+3y)^3 - j (x-y)^2(x+y)\neq 0$.   Since $U_\p$ is a nonempty open subset of $K_\p^2$ so is $U_\p'$.  Of course Theorem~\ref{T:true} will remain true if we replace each $U_\p$ by the smaller nonempty open set $U_\p'$.  By repeating the above argument with the smaller sets $U_\p'$, we will obtain an elliptic curve $E/K$ with $\rank\, E(K)=\rank E(L) = 1$ whose $j$-invariant is not equal to $j$.  Note that $E_1$ and $E$ are not isomorphic since they have different $j$-invariants.  By repeating this process of shrinking $U_\p$ over and over to exclude certain $j$-invariants, we deduce that there are infinitely many elliptic curves $E$ over $K$ for which $E(K)$ and $E(L)$ both have rank $1$.

\begin{bibdiv}
\begin{biblist}
\bib{ABHS}{article}{
	author={Alpöge, Levent },
	author={Bhargava, Manjul},  
	author={ Ho, Wei},
	author={Shnidman, Ari},
	title={Rank stability in quadratic extensions and Hilbert's tenth problem for the ring of integers of a number field},
	date={2025},
	note={arXiv:2501.18774}
}

\bib{MR3237733}{article}{
   author={Bhargava, Manjul},
   author={Skinner, Christopher},
   title={A positive proportion of elliptic curves over $\Bbb Q$ have rank
   one},
   journal={J. Ramanujan Math. Soc.},
   volume={29},
   date={2014},
   number={2},
   pages={221--242},
   issn={0970-1249},
   %review={\MR{3237733}},
   doi={10.1214/14-sts471},
}

\bib{MR179169}{article}{
   author={Cassels, J. W. S.},
   title={Arithmetic on curves of genus 1. VIII. On conjectures of Birch and
   Swinnerton-Dyer},
   journal={J. Reine Angew. Math.},
   volume={217},
   date={1965},
   pages={180--199},
   issn={0075-4102},
   %review={\MR{179169}},
   doi={10.1515/crll.1965.217.180},
}

\bib{MR3324930}{article}{
   author={Dokchitser, Tim},
   author={Dokchitser, Vladimir},
   title={Local invariants of isogenous elliptic curves},
   journal={Trans. Amer. Math. Soc.},
   volume={367},
   date={2015},
   number={6},
   pages={4339--4358},
   issn={0002-9947},
   %review={\MR{3324930}},
   doi={10.1090/S0002-9947-2014-06271-5},
}

\bib{Kai}{article}{
	author={Kai, Wataru},
	title={Linear patterns of prime elements in number fields},
	date={2025},
	note={arXiv:2306.16983}
}

\bib{KP}{article}{
	author={Koymans, Peter}, 
	author={Pagano, Carlo},
	title={Hilbert's tenth problem via additive combinatorics},
	date={2024},
	note={arXiv:2412.01768}
}	

\bib{KPnew}{article}{
	author={Koymans, Peter}, 
	author={Pagano, Carlo},
	title={Elliptic curves of rank one over number fields},
	date={2025},
	note={preprint}
}	

\bib{MR2660452}{article}{
   author={Mazur, B.},
   author={Rubin, K.},
   title={Ranks of twists of elliptic curves and Hilbert's tenth problem},
   journal={Invent. Math.},
   volume={181},
   date={2010},
   number={3},
   pages={541--575},
   issn={0020-9910},
   %review={\MR{2660452}},
   doi={10.1007/s00222-010-0252-0},
}

\bib{Neukirch}{book}{
   author={Neukirch, J\"{u}rgen},
   title={Algebraic number theory},
   series={Grundlehren der mathematischen Wissenschaften [Fundamental
   Principles of Mathematical Sciences]},
   volume={322},
   note={Translated from the 1992 German original and with a note by Norbert
   Schappacher;
   With a foreword by G. Harder},
   publisher={Springer-Verlag, Berlin},
   date={1999},
   pages={xviii+571},
   isbn={3-540-65399-6},
   %review={\MR{1697859}},
   doi={10.1007/978-3-662-03983-0},
}

\bib{MR870738}{article}{
   author={Satg\'{e}, Philippe},
   title={Un analogue du calcul de Heegner},
   language={French},
   journal={Invent. Math.},
   volume={87},
   date={1987},
   number={2},
   pages={425--439},
   issn={0020-9910},
   %review={\MR{870738}},
   doi={10.1007/BF01389425},
}

\bib{MR703488}{article}{
   author={Silverman, Joseph H.},
   title={Heights and the specialization map for families of abelian
   varieties},
   journal={J. Reine Angew. Math.},
   volume={342},
   date={1983},
   pages={197--211},
   issn={0075-4102},
   %review={\MR{703488}},
   doi={10.1515/crll.1983.342.197},
}

\bib{SilvermanII}{book}{
   author={Silverman, Joseph H.},
   title={Advanced topics in the arithmetic of elliptic curves},
   series={Graduate Texts in Mathematics},
   volume={151},
   publisher={Springer-Verlag, New York},
   date={1994},
   pages={xiv+525},
   isbn={0-387-94328-5},
   %review={\MR{1312368}},
   doi={10.1007/978-1-4612-0851-8},
}

\bib{Silverman}{book}{
   author={Silverman, Joseph H.},
   title={The arithmetic of elliptic curves},
   series={Graduate Texts in Mathematics},
   volume={106},
   edition={2},
   publisher={Springer, Dordrecht},
   date={2009},
   pages={xx+513},
   isbn={978-0-387-09493-9},
   %review={\MR{2514094}},
   doi={10.1007/978-0-387-09494-6},
}

\bib{Zyw25a}{article}{
	author={Zywina, David},
	title={An elliptic surface with infinitely many fibers for which the rank does not jump},
	date={2025},
	note={arXiv:2502.01026}
}

\bib{Zyw25b}{article}{
	author={Zywina, David},
	title={There are infinitely many elliptic curves over the rationals of rank $2$},
	date={2025},
	note={arXiv:2502.01957}
}	

\end{biblist}
\end{bibdiv}

\end{document}